\newcommand{\qy}[1]{\textcolor{blue}{#1}}
\newcommand{\mq}[1]{\textcolor{cyan}{#1}}
\newcommand{\h}[1]{\mathbf{#1}}
\newcommand{\tensor}[1]{\mathbf{#1}}
\newcommand{\trans}{^\mathsf{T}}
\newcommand{\sign}{\mbox{sign}}
\DeclareMathOperator*{\argmin}{arg\,min}
\begin{document}

\title{Accelerated Sparse Recovery via Gradient Descent with Nonlinear Conjugate Gradient Momentum
}

\titlerunning{Gradient Descent with Nonlinear Conjugate Gradient Momentum}        

\author{Mengqi Hu  \and
        Yifei Lou \and
        Bao Wang \and
        Ming Yan \and
        Xiu Yang \and
        Qiang Ye
}

\institute{
        M. Hu \at
        Department of Industrial and Systems Engineering, Lehigh University \\
        200 West Packer Avenue, Bethlehem, 18015, PA, USA\\
        \email{meh621@lehigh.edu}         
        \and
        Y. Lou \at
        Department of Mathematical Sciences, The University of Texas at Dallas\\
        800 W. Campbell Rd, Richardson, 75080, TX, USA\\
        \email{yifei.lou@utdallas.edu}
        \and
        B. Wang \at
        Department of Mathematics and Scientific Computing and Imaging Institute, The University of Utah\\
        72 Central Campus Dr, Salt Lake City, 84102, Utah, USA\\
        \email{bwang@sci.utah.edu}
        \and
        M. Yan \at
        School of Data Science, The Chinese University of Hong Kong, Shenzhen\\ 2001 Longxiang Blvd, Shenzhen, Guangdong China\\
        Department of Computational Mathematics, Science and Engineering and Department of Mathematics, Michigan State University\\ 428 South Shaw Lane, East Lansing, 48824, MI, USA\\
        \email{myan@msu.edu}
        \and
        X. Yang \at
        Department of Industrial and Systems Engineering, Lehigh University\\
        200 West Packer Avenue, Bethlehem, 18015, PA, USA\\
        \email{xiy518@lehigh.edu}
        \and
        Q. Ye \at
        Department of Mathematics, University of Kentucky, Lexington, 40513, Kentucky, USA\\
        \email{qye3@uky.edu}
}

\date{Received: date / Accepted: date}

\maketitle

\begin{abstract}
    This paper applies an idea of  adaptive momentum for the nonlinear conjugate gradient to accelerate optimization problems in sparse recovery. Specifically, we consider two types of minimization problems: a (single) differentiable function and the sum of a non-smooth function and a differentiable function. In the first case, we adopt a fixed step size to avoid the traditional line search and establish the convergence analysis of the proposed algorithm for a quadratic problem. This acceleration is further incorporated with  an operator splitting technique to deal with the non-smooth function in the second case. We use the convex $\ell_1$ and the nonconvex $\ell_1-\ell_2$ functionals as two case studies to demonstrate the efficiency of the proposed approaches over traditional methods. 
    
    \keywords{Accelerated gradient momentum \and operator splitting \and fixed step size \and convergence rate}
    
    \subclass{MSC 41A25 \and MSC 41A25 \and  MSC 65K10}
\end{abstract}

\medskip

\noindent
\textbf{Declarations} 
The MATLAB codes and datasets generated during and/or analysed during the current study  will be available under \url{https://sites.google.com/site/louyifei/Software} after publication.

\section{Introduction}
Traditional methods for reconstructing signals from measured data follow the well-known Nyquist-Shannon sampling theorem \cite{Shannon}, which guarantees the exact  recovery if 
the sampling rate is at least twice the highest frequency of the underlying signal. Similarly, the fundamental theorem of linear algebra suggests that the number of linear measurements  of a discrete finite-dimensional signal should be at least as large as its ambient dimension to ensure a stable reconstruction.
Nyquist–Shannon  theorem serves as the underlying principle of most devices \cite{boggess2015first}  such as analog-to-digital conversion, medical imaging, and video processors, 
but it is a sufficient condition for the exact recovery of any signal that requires an overly large number of measurements to be collected.


To acquire and process data more economically, the paradigm of compressive sensing (CS) \cite{donoho2006compressed}–-also known as compressed sensing, or compressive sampling–-provides a fundamentally new approach that reconstructs  certain signals from what was believed in the past to be highly incomplete measurements (information). 
CS relies on an empirical observation that most signals can be well approximated by a sparse expansion under a properly chosen basis, that is, by only a small number of non-zero coefficients. 
The number of non-zero entries of a vector $\h x\in\mathbb R^N$ is denoted by
$\Vert \h x\Vert_0$.
Note that $\Vert\cdot\Vert_0$ is named the ``$\ell_0$ norm" in \cite{donoho2006compressed},
although it is not even a semi-norm. The vector $\h x$ is called 
\emph{$s$-sparse} if $\Vert \h x\Vert_0\leq s$, and it is considered a sparse vector if $s\ll N$. 
Note that few practical systems are truly sparse through direct observations, but rather \textit{compressible}, i.e., only a few entries contribute significantly to its $\ell_1$ norm under certain transformations. 

For simplicity, we assume  linear measurements; otherwise one can always linearize the data collection process. Consequently, we consider a data vector $\h b\in \mathbb R^M$ obtained by
\begin{equation}
    \h b= A \h x + \h n,\label{eq:ax=b}
\end{equation}
where $ A\in \mathbb R^{M \times N}$  is called a sensing matrix,  $\h x\in \mathbb R^N$ is
an underlying signal to be recovered, and $\h n\in\mathbb R^M$ is the noise term. {We assume the noise follows i.i.d. Gaussian distribution.}
To find a sparse vector $\h x$ from \eqref{eq:ax=b}, one formulates an unconstrained minimization problem,
\begin{equation}\label{eq:l0}
  \hat{\h x}_0=\argmin_{\h x}\lambda\Vert \h x\Vert_0+\frac 1 2 \Vert  A \h x - \h b\Vert_2^2,  
\end{equation}
where $\lambda$ is a positive parameter to be tuned such that $\Vert A
\hat{\h x}_0-\h b\Vert_2\leq\epsilon$ for a pre-set error tolerance $\epsilon$ that often corresponds to {the standard deviation of the random Gaussian noise. For other types of noise, e.g., Poisson noise, then the least-squares formulation  $\|A\h x-\h b\|_2^2$ is not a good choice of the data misfit.}
As the $\ell_0$ minimization \eqref{eq:l0} is NP-hard~\cite{natarajan95}, one  replaces it by the convex  $\ell_1$ norm, i.e.,
\begin{equation}\label{eq:l1}
  \hat{\h x}_1=\argmin_{\h x}\lambda\Vert \h x\Vert_1+ \frac 1 2 \Vert A \h x - \h b\Vert_2^2.  
\end{equation} 
In this paper, we consider a general formulation for sparse recovery
\begin{equation}\label{eq:proximal setup}
    \min_{\h x}  \lambda f(\h x) + g(\h x),
\end{equation}
where $f(\cdot)$ is a regularization term and $g(\cdot)$ is a (convex and differentiable) data fidelity term, e.g., $g(\h x)=\frac 1 2 \Vert A \h x - \h b\Vert_2^2$. 
We assume that $f$ is a continuous (possibly non-differentiable) function
that can enhance the sparsity of $\h x$. 
For instance, the non-convex metric $\ell_{p}$ for $p\in(0,1)$ can be viewed as a continuation effort to approximate $\ell_{0}$ as $p \rightarrow 0$ {\cite{chan2014half,lanzageneralized,huang2017majorization}}. Another regularization that achieves a continuation from $\ell_0$ to $\ell_1$ is the error function (ERF) \cite{guo2021novel} by changing its internal parameter. Some non-convex regularizations derived from $\ell_{1}$ include capped $\ell_1$ \cite{zhang2009multi,shen2012likelihood,louYX16}, transformed $\ell_1$ (TL1)~\cite{lv2009unified,zhangX17,zhangX18,GuoLLtransform18}, and sorted $\ell_{1}$ \cite{huang2015nonconvex}.
A combination of different norms can also be served as a sparsity promoting sparsity, e.g.,  $\ell_1-\ell_2$  \cite{yinEX14,louYHX14,louY18} and $\ell_1/\ell_2$ \cite{l1dl2,L1dL2_accelerated}.
To the best of our knowledge, only $\ell_1-\ell_2$ and TL1 have the exact sparse recovery guarantees based on the RIP type of conditions \cite{yinLHX14,zhangX18}, which are actually more strict compared to the one for the $\ell_1$ model. As these RIP conditions are sufficient and unverifiable, many works reported the empirical advantages of non-convex regularizations over the convex $\ell_1$ approach in promoting sparsity. {A major difficulty in minimizing \eqref{eq:proximal setup} for a nonconvex regularization $f(\cdot)$ is that many algorithms may be stuck at the local optimal solutions.}

As $f(\cdot)$ is non-differentiable, gradient-based optimization methods can not be directly applied to 
 minimize \eqref{eq:proximal setup}, not to mention some acceleration techniques by adaptive momentum \cite{POLYAK19641,polak1969note,hestenes1952methods,fletcher1964function}. One remedy involves a smooth approximation of $f$ such as using the  Huber function~\cite{huber1987place,hermey1999fitting,sun2020adaptive} to approximate the $\ell_1$ norm. In general, several papers reported using  smoothing  to approximate non-smooth functions to improve the performance of non-linear conjugate algorithms~\cite{chen2010smoothing,wu2019signal,pang2016smoothing,narushima2013smoothing}.
Another alternative is based on operator splitting to deal with the non-smooth term $f(
\cdot)$ and the smooth function $g(\cdot)$ separately, for example, 
 forward-backward splitting (FBS) \cite{combettes2005signal}, the alternative direction method of multipliers (ADMM) \cite{boydPCPE11admm}, and iteratively reweighted $L_1$ \cite{candes2008enhancing,lu2014iterative}.

We propose to combine the operator splitting with the momentum acceleration. In particular, we incorporate the momentum update in the gradient descent when minimizing the data fitting term $g(\cdot)$ for speed-up, while relying on proximal operators \cite{parikh2014proximal} to deal with the non-differentiable function $f(\cdot)$. Starting by $f(\cdot)=\emptyset,$ i.e., minimizing a single differentiable function $g(\cdot)$, we promote the choice of fixed step size in the momentum-based gradient descent algorithm and analyze its convergence rate for a quadratic problem. To deal with the non-smooth function $f(\cdot)$, we further adopt a splitting technique and consider two case studies when the proximal operator according to $f(\cdot)$ has a closed-form solution. We conduct experiments on a quadratic problem, $\ell_1$ and $\ell_1-\ell_2$ minimization problems to compare among different momentum update formulas and showcase the speed-up of the proposed approach  with simple implementation over the traditional gradient-based approaches.

The remaining of this paper is organized as follows. Section~\ref{sect:opt review} examines the case of minimizing a single differentiable function. In particular, we advocate a constant step size and prove the convergence for a quadratic problem. 
The proposed marriage of FBS and momentum acceleration is discussed in Section~\ref{sect:min-2} with  experiments on two case studies of $\ell_1$ and $\ell_1-\ell_2$ regularizations, showing the faster convergence of the proposed method than existing approaches. Finally, conclusions and future works are presented in Section~\ref{sect:conclude}.

\section{Minimizing a single function}\label{sect:opt review}
We review  in Section~\ref{sect:grad} gradient-based algorithms that minimize a single function, including gradient descent, conjugate gradient, and adaptive momentum methods. We propose to  combine the Fletcher-Reeve moment and gradient descent with a fixed step size in Section~\ref{sect:prop-alg}. The convergence of the proposed scheme can be established  for a quadratic problem. Lastly, experimental comparison is presented in Section~\ref{sect:exp_quad}. 

\subsection{Literature review}\label{sect:grad}
Gradient descent is a class of first-order iterative optimization algorithms for finding a local minimum of a differentiable function. This type of algorithms involves repeated moving along the opposite direction of the gradient of the objective function at the current point,  since it is the direction where function value decreases at the fastest rate. 

Given a differentiable function $g(\cdot)$, a general form of gradient descent (GD) that  minimizes $g(\tensor{x})$ can be described as, 
\begin{align}
    \begin{cases}
        \h p^{(l+1)} &= -\nabla g(\h x^{(l)})\\
        \h x^{(l+1)} &= \h x^{(l)} + \alpha^{(l+1)}\h p^{(l+1)},
    \end{cases}\label{eq:gradient descend}
\end{align}
where $l$ indexes the iteration number and
$\alpha^{(l+1)}>0$ is a step size that can be fixed or updated iteratively.
There are many variations of GD depending on how the step size is determined and/or the descending direction is chosen. 
For example,   steepest descent (SD) is perhaps one of the simplest variations, 
which goes as follows,
\begin{align}
    \begin{cases}
        \h p^{(l+1)} &= -\nabla g(\h x^{(l)})\\
        \alpha^{(l+1)} &= \argmin\limits_{\alpha } g(\h x^{(l)} + \alpha\h p^{(l+1)})\\
        \h x^{(l+1)} &= \h x^{(l)} + \alpha^{(l+1)}\h p^{(l+1)}.
    \end{cases}\label{eq:steepest descend}
\end{align}
In each iteration, SD performs an exact line search  to achieve the maximum descent along the gradient direction, i.e., the descent is the steepest. However, empirically it does not work  well in most cases, since such a local descending property does not necessarily coincide with the overall descending of the original function.

Notice that the search direction in each iteration of \eqref{eq:steepest descend} only utilizes the information at the current step $\h x^{(l)}$ without any  information from previous iterations. Adding them back  leads to  momentum-based algorithms, which are also called as heavy ball algorithms \cite{POLYAK19641}. The term ``momentum'' is an analogy of a heavy ball sliding on the surface of values of the function being minimized when the update of each step is memorized in the process.
To this end, we refer the following iteration
\begin{align}
    \begin{cases}
    \h p^{(l+1)} &= -\nabla g(\h x^{(l)}) + \beta^{(l+1)}\h p^{(l)} \\
    \h x^{(l+1)} &= \h x^{(l)} + \alpha^{(l+1)}\h p^{(l+1)},
    \end{cases}\label{eq: grad momentum}
\end{align}
as gradient descent with momentum (GDM).  Both   $\alpha^{(l+1)}$ and $\beta^{(l+1)}$ in \eqref{eq: grad momentum} can be fixed or adaptively chosen according to  a certain scheme. For instance, if we update  $\alpha^{(l+1)}$ in the same way as SD \eqref{eq:steepest descend}, the corresponding algorithm 
\begin{align}
    \begin{cases}
 \beta^{(l+1)} &= \frac{\|\nabla g(\h x^{(l)})\|^2}{\|\nabla g(\h x^{(l-1)})\|^2}\\
        \h p^{(l+1)} &= -\nabla g\left(\h x^{(l)} \right)  + \beta^{(l+1)} \h p^{(l)}\\
         \alpha^{(l+1)} &= \argmin\limits_{\alpha } g(\h x^{(l)} + \alpha\h p^{(l+1)})\\
        \h x^{(l+1)} &= \h x^{(l)} + \alpha^{(l+1)}\h p^{(l+1)},
        \end{cases}\label{eq:CG FR2}
\end{align}
is identical to the classic nonlinear conjugate gradient (CG).
The $\beta$ update for momentum coefficient is called Fletcher-Reeves (FR) momentum in nonlinear conjugate gradient algorithms  \cite{hestenes1952methods,fletcher1964function}.
 In addition to FR, other popular momentum updates include 
\begin{itemize}
	\item Polak-Ribi\`{e}re (PR)  \cite{polak1969note}
	\begin{equation}\label{eq:CG PR}
	 \beta_{PR}^{(l+1)} = \frac{\langle\nabla g(\h x^{(l)}),\nabla g(\h x^{(l)})-\nabla g(\h x^{(l-1)})\rangle}{\|\nabla g(\h x^{(l-1)})\|^2};
	\end{equation}
	\item Hestenes-Stiefel (HS) \cite{Hestenes&Stiefel:1952}
	\begin{equation}\label{eq:CG HS}
	 \beta_{HS}^{(l+1)} = \frac{\langle\nabla g(\h x^{(l)}),\nabla g(\h x^{(l)})-\nabla g(\h x^{(l-1)})\rangle}{-\langle{\h p}^{(l)},\nabla g(\h x^{(l)})-\nabla g(\h x^{(l-1)})\rangle};
	\end{equation}
	\item Dai-Yuan (DY) \cite{dai1999nonlinear}
	\begin{equation}\label{eq:CG DY}
	\beta_{DY}^{(l+1)} = \frac{\|\nabla g(\h x^{(l)})\|^2}{-\langle {\h p}^{(l)},\nabla g(\h x^{(l)})-\nabla g(\h x^{(l-1)})\rangle}.
	\end{equation}
\end{itemize}


Another type of momentum-based algorithms was developed by Yurii Nesterov \cite{nesterov1983method,nesterov2003introductory}. Starting from $t^{(0)} = 1$, Nesterov's accelerated gradient (NAG) is expressed as, 
\begin{align}
    \begin{cases}
    t^{(l+1)} &= \frac{ 1+\sqrt{4(t^{(l)})^2 + 1} }{2},\\
    \h p^{(l+1)} &= -\nabla g\left( \h x^{(l)} \right),\\
    \h y^{(l+1)} &= \h x^{(l)} + \alpha^{(l+1)}\h p^{(l+1)},\\
    \h x^{(l+1)} &= \h y^{(l+1)} + \frac{t^{(l)} - 1}{t^{(l+1)}}(\h y^{(l+1)} - \h y^{(l)}).
    \end{cases}\label{eq:Nest accel grad}
\end{align}
Similarly to other gradient-based algorithms, the step size $\alpha^{(l+1)}$ in NAG can be fixed or updated during the iteration. For convex function $g(\cdot),$ NAG  achieves a convergence rate of $O( \frac{1}{l^2})$, as opposed to $O(\frac 1 l)$ obtained by
standard gradient-based methods. 
This momentum scheme can be further accelerated by a proper restart with provable guarantees in certain circumstances \cite{nemirovski1985optimal,giselsson2014monotonicity,su2014differential,roulet2020sharpness}.

\subsection{{FR momentum gradient descent}}\label{sect:prop-alg}


Exact line search is not necessarily optimal, as the gradient descent direction may not be a good search direction. As a result, the steepest descent algorithm \eqref{eq:steepest descend} bounces back and forth in the valley formed by the objective function rather than down the valley.  Similar conclusion can be drawn for certain momentum based algorithms.
As pointed out in \cite{powell1977restart,hager2006survey},  exact line search would lead to a very small step size in such a way that two consecutive iterates do not vary too much; this phenomenon is called \textit{jamming}. To avoid jamming, one can use  a hybrid momentum scheme \cite{dai2001efficient,andrei2008another,nocedal2006numerical} or an inexact line search \cite{hager2005new,rivaie2015new}. Instead of exact search as used in SD, an inexact search refers to finding a step size $\alpha$ that satisfies the Wolfe conditions \cite{al1985descent,dai1999nonlinear,gilbert1992global,hager2005new}, i.e.,
\begin{align}
    \begin{cases}
        g(\h x^{(l)} + \alpha^{(l)} \h p^{(l)}) \leqslant g(\h x^{(l)} ) + c_{1}  \langle \alpha^{(l)}\h p^{(l)}, \nabla g(\h x^{(l)}) \rangle,\\
          \langle -\h p^{(l)}, \nabla g(\h x^{(l)} + \alpha^{(l)} \h p^{(l)})\rangle \leqslant  c_{2} \langle -\h p^{(l)}, \nabla g(\h x^{(l)})\rangle,
    \end{cases}\label{eq:wolfe}
\end{align}
for two constants $ 0< c_{1} < c_{2} < 1$. The first equation of \eqref{eq:wolfe}  is also called Armijo-Goldenstein condition 
\cite{armijo1966minimization,Bertsekas99}, which is usually used in back-tracking step sizes. These conditions play an important role in establishing a descent property and global convergence of conjugate descent. 
Instead of designing a update scheme for $\alpha,$ we consider a fixed step size $\alpha$ in the gradient descent that is combined with the FR moment, i.e.,
\begin{align}
    \begin{cases}
 \beta^{(l+1)} &= \frac{\|\nabla g(\h x^{(l)})\|^2}{\|\nabla g(\h x^{(l-1)})\|^2}\\
        \h p^{(l+1)} &= -\nabla g\left(\h x^{(l)} \right)  + \beta^{(l+1)} \h p^{(l)}\\
        \h x^{(l+1)} &= \h x^{(l)} + \alpha\h p^{(l+1)},
        \end{cases}\label{eq:CG FR}
\end{align}
which is referred to as FR gradient descent (FRGD). By fixing $\alpha$, most properties  used in the convergence  analysis of conjugate gradient no longer hold.
Fortunately, we can borrow a technique used in an inexact conjugate gradient (due to round off errors) or inexact preconditioning \cite{golub1999inexact,tong2000analysis} to analyze the convergence of FRGD \eqref{eq:CG FR}. Theorem \ref{thm:quad conv rate} characterizes the convergence analysis of the proposed FRGD for  a  quadratic  problem, 
\begin{align}
    \min_{\h x} g(\h x) = \frac{1}{2} \h x\trans A \h x + \h x\trans\mathbf{b},\label{eq:quadratic}
\end{align}
where $A$ is a strictly symmetric positive definite matrix. To this end, we define the condition number of $A$ as $\kappa(A) = |\lambda_{\max}(A)/\lambda_{\min}(A)|$, i.e., the ratio between the largest and smallest eigenvalues.

\begin{theorem}\label{thm:quad conv rate}
Suppose $\{\h x^{(l)}, \h p^{(l)}\}$ be generated by  \eqref{eq:CG FR} with a fixed step size $\alpha$ when minimizing \eqref{eq:quadratic}.  Let $\h r^{(l)} = \nabla g(\h x^{(l)})$, $\rho = \max_{0 \leqslant j \leqslant i \leqslant l-1} \|\h r^{(i)}\|_2/\| \h r^{(j)} \|_2$, $\h z^{(l)} = \h r^{(l)} / \|\h r^{(l)} \|_2$ and $Z^{(l)} = [\h z^{(0)}, \h z^{(1)},\cdots, \h z^{(l-1)}]$. If $\h z^{(0)}, \h z^{(1)},\cdots, \h z^{(l)}$ are linearly independent, then there exists a constant \begin{align} K_{l} \leqslant l(1+\frac{l\rho} 2)\|A\|_{2}\kappa(Z^{(l+1)}),\end{align}
such that
\begin{align}\label{eq:conv-rate}
    \|\h r^{(l)}\|_2 \leqslant 2(1 + K_{l}) \left( \frac{\sqrt{\kappa(A)} - 1}{\sqrt{\kappa(A)} + 1} \right)^{l} \|\h r^{(0)}\|_2.
\end{align}

\end{theorem}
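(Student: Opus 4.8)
The plan is to treat FRGD as an inexact conjugate gradient method and to compare the residual it produces against the optimal Krylov residual, whose decay is governed by Chebyshev polynomials. First I would eliminate the search directions $\h p^{(l)}$ from the two updates in \eqref{eq:CG FR}. Since $\h r^{(l)}=A\h x^{(l)}+\h b$, the update gives $\h r^{(l+1)}-\h r^{(l)}=\alpha A\h p^{(l+1)}$, so that $\h p^{(l+1)}=\tfrac1\alpha A^{-1}(\h r^{(l+1)}-\h r^{(l)})$; substituting this and the analogous identity at step $l$ into $\h p^{(l+1)}=-\h r^{(l)}+\beta^{(l+1)}\h p^{(l)}$ and multiplying by $\alpha A$ yields the three-term recurrence
\[
\h r^{(l+1)}=\bigl(1+\beta^{(l+1)}\bigr)\h r^{(l)}-\alpha A\h r^{(l)}-\beta^{(l+1)}\h r^{(l-1)},\qquad l\geqslant 1,
\]
with $\h r^{(1)}=(I-\alpha A)\h r^{(0)}$. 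An easy induction then shows $\h r^{(l)}=P_l(A)\h r^{(0)}$ for a polynomial $P_l$ of degree $l$ with $P_l(0)=1$, and rewriting the recurrence columnwise produces the matrix identity $\alpha A R_{l+1}=R_{l+1}\hat S_{l+1}-\h r^{(l+1)}\h e_{l+1}\trans$, where $R_{l+1}=[\h r^{(0)},\dots,\h r^{(l)}]$, $\h e_{l+1}$ is the last coordinate vector, and $\hat S_{l+1}$ is tridiagonal with entries built from $\alpha$ and the coefficients $\beta^{(\cdot)}$.

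Next I would isolate the effect of the fixed step size relative to the exact line search of genuine CG. Normalizing the columns through $R_{l+1}=Z^{(l+1)}D_{l+1}$ with $D_{l+1}=\mathrm{diag}(\|\h r^{(0)}\|_2,\dots,\|\h r^{(l)}\|_2)$ turns the identity into a relation between $A$ and the tridiagonal matrix $\tilde S_{l+1}=D_{l+1}\hat S_{l+1}D_{l+1}^{-1}$, expressed in the generally non-orthogonal residual frame $Z^{(l+1)}$. In exact CG the residuals are mutually orthogonal, so $Z^{(l+1)}$ would have orthonormal columns and $\kappa(Z^{(l+1)})=1$; here the linear-independence hypothesis guarantees only that $Z^{(l+1)}$ has full column rank, and $\kappa(Z^{(l+1)})$ is precisely the quantity that measures how far FRGD has drifted from that orthogonal reference process. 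This is the inexact-CG and inexact-preconditioning viewpoint of \cite{golub1999inexact,tong2000analysis}.

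With this set-up, I would bound $\|\h r^{(l)}\|_2=\|P_l(A)\h r^{(0)}\|_2$ by comparison with the optimal residual polynomial. Writing $P_l=P_l^{\star}+(P_l-P_l^{\star})$, where $P_l^{\star}$ is the degree-$l$ polynomial normalized by $P_l^{\star}(0)=1$ that attains the Chebyshev extremal value on $[\lambda_{\min}(A),\lambda_{\max}(A)]$, the leading term contributes, via $\|P_l^{\star}(A)\|_2=\max_{\lambda\in\mathrm{spec}(A)}|P_l^{\star}(\lambda)|$, the exact-CG factor $2\bigl((\sqrt{\kappa(A)}-1)/(\sqrt{\kappa(A)}+1)\bigr)^{l}\|\h r^{(0)}\|_2$. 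Because both polynomials equal $1$ at the origin, their difference factors as $\lambda\,q_{l-1}(\lambda)$, so the correction term is $A\,q_{l-1}(A)\h r^{(0)}$; expanding $q_{l-1}(A)\h r^{(0)}$ in the residual basis $R_{l+1}=Z^{(l+1)}D_{l+1}$ and estimating its norm produces the factor $\|A\|_2$, the basis-conditioning factor $\kappa(Z^{(l+1)})$, and, through the diagonal scaling $D_{l+1}$ together with $\beta^{(\cdot)}=\|\h r^{(\cdot)}\|_2^2/\|\h r^{(\cdot-1)}\|_2^2$, the residual-norm ratios bounded by $\rho$. Summing these single-step contributions across the $l$ iterations accounts for the remaining powers of $l$ and collapses to $K_l\leqslant l(1+\tfrac{l\rho}{2})\|A\|_2\,\kappa(Z^{(l+1)})$.

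The main obstacle is this last estimate: controlling the correction polynomial $q_{l-1}$ and translating the loss of orthogonality into the explicit constant while still extracting the same geometric rate. The delicate bookkeeping is to couple the tridiagonal perturbation, namely the difference between $\tilde S_{l+1}$ and the Lanczos matrix that exact CG would produce, with the oblique change of basis $Z^{(l+1)}$ and the scaling $D_{l+1}$, so that the accumulated error over $l$ steps reduces to exactly the stated product of $l$, $(1+\tfrac{l\rho}{2})$, $\|A\|_2$, and $\kappa(Z^{(l+1)})$ rather than a looser expression. Ensuring that $Z^{(l+1)}$ remains well enough conditioned for these estimates to be meaningful is where the linear-independence assumption does its essential work.
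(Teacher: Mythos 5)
Your overall framing---treat FRGD as an inexact conjugate gradient, derive the perturbed Krylov relation $\alpha A R_{l+1}=R_{l+1}\hat S_{l+1}-\h r^{(l+1)}\h e_{l+1}\trans$, normalize by the residual norms so that $\kappa(Z^{(l+1)})$ measures the loss of orthogonality, and finish with the Chebyshev bound---is the same strategy as the paper, and you point to the right references. The gap is at the one step that carries all the weight: the quasi-optimality inequality
\[
\|\h r^{(l)}\|_2 \leqslant (1+K_l)\min_{p\in\mathcal{P}_l,\ p(0)=1}\|p(A)\h r^{(0)}\|_2 .
\]
Your route to it, writing $P_l=P_l^{\star}+\lambda q_{l-1}(\lambda)$ and ``estimating the norm'' of $Aq_{l-1}(A)\h r^{(0)}$ in the basis $Z^{(l+1)}$, does not go through as described: nothing in the construction bounds the coefficients of $q_{l-1}$, which is the difference between the FRGD residual polynomial and the Chebyshev-optimal one. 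The only available estimate, $\|q_{l-1}(A)\h r^{(0)}\|_2\leqslant\|A^{-1}\|_2\bigl(\|\h r^{(l)}\|_2+\|P_l^{\star}(A)\h r^{(0)}\|_2\bigr)$, feeds $\|\h r^{(l)}\|_2$ back into its own bound and only closes for well-conditioned $A$. The paper does not prove this inequality at all; it invokes \cite[Theorem 3.5]{tong2000analysis}, which is precisely the statement that the relation $AZ^{(l)}=Z^{(l)}T^{(l)}-\h r^{(l)}\h e_{(l)}\trans/(\hat\alpha\|\h r^{(0)}\|)$ forces quasi-optimality, with $K_l$ given explicitly in terms of $A$, $Z^{(l)}$, $T^{(l)}$ and the pseudo-inverse $Z^{(l+1)}_{\dagger}$ via an oblique-projection identity. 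You flag exactly this step as ``the main obstacle'' and leave it unresolved, so the proof is not complete.

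The second missing piece is the explicit $l(1+\tfrac{l\rho}{2})$ factor. In the paper it comes from a concrete factorization: since $\beta^{(l)}=\|\h r^{(l)}\|_2^2/\|\h r^{(l-1)}\|_2^2$, one has $T^{(l)}=\tfrac1\alpha\Tilde L^{(l)}(\Tilde L^{(l)})\trans$ with $\Tilde L^{(l)}$ unit lower bidiagonal with subdiagonal entries $-\sqrt{\beta^{(i)}}$; its inverse is unit lower triangular with $(i,j)$ entry $\|\h r^{(i-1)}\|_2/\|\h r^{(j-1)}\|_2$, whence $\|(\Tilde L^{(l)})^{-1}\|_F^2\leqslant l+l(l-1)\rho/2$ and $\|(T^{(l)})^{-1}\|_2\leqslant\alpha l(1+l\rho/2)$. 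Your sketch attributes the $\rho$-dependence vaguely to ``the diagonal scaling together with $\beta$'' but never produces this factorization, so the stated bound on $K_l$ is asserted rather than derived. To repair the argument, either cite the Tong--Ye theorem as the paper does and then carry out the bidiagonal computation above, or genuinely reprove the quasi-optimality lemma, which is a substantial result in its own right and not a consequence of the triangle inequality on residual polynomials.
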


\begin{proof}
   Denote $R^{(l)} = [\h r^{(0)}, \cdots, \h r^{(l-1)}]$ and $D^{(l)} = \mathrm{diag}\{\|\h r^{(0)}\|_2, \cdots, \|\h r^{(l-1)}\|_2\}$, then $Z^{(l)} = R^{(l)} (D^{(l)})^{-1}$.
   We further denote $P^{(l)} = [\h p^{(0)}, \cdots, \h p^{(l-1)}]$. It follows from {$\h r^{(l+1)} = \h r^{(l)} - \alpha A p^{(l)}$} that  
    \begin{align}
        \alpha A P^{(l)} &= [\h r^{(0)} - \h r^{(1)}, \cdots, \h r^{(l-1)} - \h r^{(l)}]\notag\\
            &= R^{(l)} L^{(l)} - \h r^{(l)}\h e_{(l)}\trans, \notag\\
            &= {{\h Z}}^{(l)} D^{(l)} L^{(l)} - \h r^{(l)}\h e_{(l)}\trans ,\label{eq:p decomp}
    \end{align}
    where $\h e_{(l)} = [0,\cdots,0,1]\trans$ is an {$l \times 1$ column vector} and $ L^{(l)}$ is the $l \times l$ lower bidiagonal matrix with $1$ on the diagonal and $-1$ on the subdiagonal.
    Similarly using the $\h p$ update of $\h p^{(l)} = \h r^{(l)} + \beta^{(l)} \h p^{(l-1)} $, we have 
    \begin{equation}
         Z^{(l)} = R^{(l)} (D^{(l)})^{-1} = P^{(l)} U^{(l)} (D^{(l)})^{-1}, \label{eq:z decomp}
    \end{equation}
    where  $ U^{(l)}$ is the $l \times l$ upper bidiagonal matrix with $1$ on the diagonal and $-\beta^{(1)}$, $\cdots$, $-\beta^{(l-1)}$ on the subdiagonal.
    Combining \eqref{eq:p decomp} \eqref{eq:z decomp}, we obtain
    \begin{align}
        A  Z^{(l)} =  Z^{(l)} T^{(l)} - \frac{\h r^{(l)} \h e_{(l)}\trans}{\hat\alpha \|\h r^{(0)}\|},
    \end{align}
    where $T^{(l)} = \frac{1}{\alpha} D^{(l)}L^{(l)}U^{(l)}(D^{(l)})^{-1}$ and $\hat\alpha= \alpha \|\h r^{(l-1)}/ \|\h r^{(0)}\|$. It is straightforward to verify that $\hat\alpha = \h e_{(l)}\trans (T^{(l)})^{-1} \h e_{(1)}$. 
    By  \cite[Theorem 3.5]{tong2000analysis} we have
    \begin{align}
        \|\h r^{(l)} \|_2 \leqslant (1+K_{l}) \min_{p\in\mathcal{P}_{l}, p(0) = 1} \|p(A) \h r^{(0)}\|_2,
    \end{align}
    where $K_{l} = \|A Z^{(l)} T^{(l)} [I_{(l)}, 0] Z^{(l+1)}_{\dagger}\|_{2} \leqslant  \|A\|_{2} \|T^{(l)}\|_{2} \| Z^{(l)} \|_{2} \| Z^{(l+1)}_{\dagger}\|_{2}$,  $Z^{(l+1)}_{\dagger}$ is the pseudo-inverse of $Z^{(l+1)}$, and $\mathcal{P}_{l}$ is the space of polynomials of degree $l$. 
  By definition  $\beta^{(l)} = \|\h r^{(l)}\|_2^{2} / \|\h r^{(l-1)}\|_2^{2},$ we can rewrite 
    \begin{align}
        T^{(l)} = \frac{1}{\alpha} D^{(l)}L^{(l)} (D^{(l)})^{-1} D^{(l)}U^{(l)}(D^{(l)})^{-1}
        &= \frac{1}{\alpha} \Tilde{L}^{(l)} \Tilde{U}^{(l)},
    \end{align}
    where
    \begin{align}
        \Tilde{L}^{(l)} &= D^{(l)}L^{(l)} (D^{(l)})^{-1}\nonumber\\
        &= \begin{bmatrix}
            1   &   &   &   &\\
            -\frac{\|\h r^{(1)}\|}{\|\h r^{(0)}\|}  & 1 & & &\\
            &   -\frac{\|\h r^{(2)}\|}{\|\h r^{(1)}\|}  & 1 & &\\
            & & \ddots &\ddots &\\
            & & &-\frac{\|\h r^{(l-1)}\|}{\|\h r^{(l-2)}\|}  & 1
        \end{bmatrix}\\ \notag
        &= \begin{bmatrix}
            1   &   &   &   &\\
            -\sqrt{\beta^{(1)}}  & 1 & & &\\
            &    -\sqrt{\beta^{(2)}}  & 1 & &\\
            & & \ddots &\ddots &\\
            & & & -\sqrt{\beta^{(l-1)}}  & 1
        \end{bmatrix},\notag
    \end{align}
    and
    \begin{align}
        \Tilde{U}^{(l)} &= D^{(l)}U^{(l)} (D^{(l)})^{-1}\notag\\
        &= \begin{bmatrix}
            1   & -\beta^{(1)}\frac{\|\h r^{(0)}\|}{\|\h r^{(1)}\|}  &   &   &\\
              & 1 & -\beta^{(2)}\frac{\|\h r^{(1)}}{\h r^{(2)}\|}& &\\
            & & \ddots &\ddots &\\
            &     &  & 1 &-\beta^{(l-1)}\frac{\|\h r^{(l-2)}\|}{\|\h r^{(l-1)}\|} \\
            & & & & 1
        \end{bmatrix}\notag\\
        &= \begin{bmatrix}
            1   & -\sqrt{\beta^{(1)}}  &   &   &\\
              & 1 &-\sqrt{\beta^{(2)}}  & &\\
            & & \ddots &\ddots &\\
            &     & & 1 &-\sqrt{\beta^{(l-1)}}\\
            & & &   & 1
        \end{bmatrix}= (\Tilde{L}^{(l)})\trans.
    \end{align}
    We can see that $(\Tilde{L}^{(l)})^{-1}$ is a lower triangular matrix with 1 on its diagonal and $(i,j)$-th entry be $\sqrt{\beta^{(j)}\beta^{(j+1)}\cdots\beta^{(i)}} = \|\h r^{(i-1)}\|_{2}/\|\h r^{(j-1)}\|_{2}$ for all $i>j$. Let $\rho$ be an upper bound of $\|\h r^{(i-1)}\|_{2}/\|\h r^{(j-1)}\|_{2}$, then we have $\|(\Tilde{L}^{(l)})^{-1}\|^{2}_{F} \leqslant l + l(l-1)\rho/2$ and therefore 
    \begin{align}
        \|(T^{(l)})^{-1}\|_{2} &= \alpha \|(\Tilde{L}^{(l)}(\Tilde{L}^{(l)})\trans)^{-1}\|_{2} =  \alpha \|(\Tilde{L}^{(l)})^{-1}\|_{2}^{2}\\
        &\leqslant \alpha\|(\Tilde{L}^{(l)})^{-1}\|^{2}_{F} 
        \leqslant \alpha l( 1 + l\rho/2).
    \end{align}
    Combining with the fact $\| Z^{(l)}\|_{2}\| Z^{(l+1)}_{\dagger}\|_{2}\leqslant \| Z^{(l+1)}\|_{2}\|Z^{(l+1)}\|_{2} = \kappa( U^{(l+1)})$ 
    yields $K_{l} \leqslant l\alpha(1+l\rho/2)\|A\|_{2}\kappa(\h Z^{(l+1)})$. 
    Finally, it follows the standard conjugate gradient convergence bound \cite{saad2003iterative} that gives
    \begin{align*}
        \min_{p\in\mathcal{P}_{l}, p(0) = 1} \|p(A) \h r^{(0)}\| _{2}
       & \leqslant \min_{p\in\mathcal{P}_{l}, p(0) = 1} \max_{i} |p(\lambda_{i})|\|\h r^{(0)}\|_{2}\\
       & \leqslant 2\left( \frac{\sqrt{\kappa(A)} - 1}{\sqrt{\kappa(A)} + 1} \right)^{l} \|\h r^{(0)}\|_{2},
    \end{align*}
    where $\lambda_{i}$ are the eigenvalues of matrix $A$ and the result follows. \qed
\end{proof}
We see that this convergence rate \eqref{eq:conv-rate} is similar to the one of the classic conjugate gradient algorithm, which is given by
\begin{align}\label{eq:conv-rateCG}
    \|\h r^{(l)}\|_{\qy{A^{-1}}} \leqslant 2 \left( \frac{\sqrt{\kappa(A)} - 1}{\sqrt{\kappa(A)} + 1} \right)^{l} \|\h r^{(0)}\|_{\qy{A^{-1}}}.
\end{align}
The difference between \eqref{eq:conv-rate} and \eqref{eq:conv-rateCG} lies in the additional term of $1 + K_{l}$ introduced by the fixed step size. Similar to other accelerated gradient schemes and Krylov subspace methods, this convergence rate of \eqref{eq:conv-rate} and \eqref{eq:conv-rateCG} is achieved only when the current position is not in the neighborhood of a stationary point. \mq{In other words, the convergence is only achieved at the first few iterations, not when the iteration number goes to infinity. We will demonstrate such a phenomenon later in the numerical examples.}
In practice, the vectors $\h p,A\h p,\cdots A^{k}\h p$ usually form an ill-conditioned basis for the Krylov subspace.
As $k\rightarrow \infty$, $A^k\h p$ points to nearly the same direction, which is the  eigenvector corresponding to the dominant eigenvalue of $A$ according to the power method. As a result, the acceleration is less effective. This is a common drawback for all Krylov subspace based algorithms \cite{fundamentals_Sec6.2,liesen2013krylov2.3} such as Arnoldi, Lanczos, conjugate gradient, etc. There are methods designed to address this issue, which falls outside our paper's scope.

{Theorem~\ref{thm:quad conv rate} is based on the structure of momentum iterations, which is applicable to other formulations of momentum such as  PR  (\ref{eq:CG PR}), HS (\ref{eq:CG HS}), and  DY (\ref{eq:CG DY}). The effect of these different formulations of momentum is on the quality of the basis  $\h z^{(0)}, \h z^{(1)},\cdots, \h z^{(l)}$ constructed, which is more complicated to analyze and will be left as future work. }

{We observe empirically  that the FR formulation of $\beta^{(l+1)} $ is most stable among the four. Note that all  four formulations are equivalent for a quadratic function $\frac{1}{2} \h x\trans A \h x + \h x\trans\mathbf{b}$ and they all enforce the $A$-orthogonality of $\{\h p^{(l)}\}$ and hence the orthogonality of $\{\h z^{(l)}\}$ when exact search $$\alpha^{(l+1)} = \argmin\limits_{\alpha } g(\h x^{(l)} + \alpha\h p^{(l+1)}),$$ is used. However,
the HS and DY formulations use the search direction $\h p^{(l)}$ in addition to the gradient. As a result, the perturbation on  $\beta^{(l+1)}$ for HS and DY is affected through 
both the gradient and the search direction, when using a fixed step size $\alpha$. We observe empirically that the range of the step size required by HS and DY  to converge is usually  one to two orders of magnitude smaller than the one for FR and PR.
Thus, the FR version, having the simplest formulation, has the least first-order perturbation errors and can be expected to be more stable.} 

\subsection{Experimental results}\label{sect:exp_quad}
{
\subsubsection{Rosenbrock function}
We start with a textbook example of
the Rosenbrock function defined by $$f(x,y) = 100 (y -x^2)^2 + (1-x)^2,$$ 
as a test problem. This function has a long parabolic-shaped flat valley, depicted in Figure~\ref{fig:rosen}, which causes difficulty for any algorithm to converge to the global minimum of $f(x,y)$.}
\begin{figure}
    \centering
    \subfigure[FR]{
    \includegraphics[width = 0.4\textwidth]{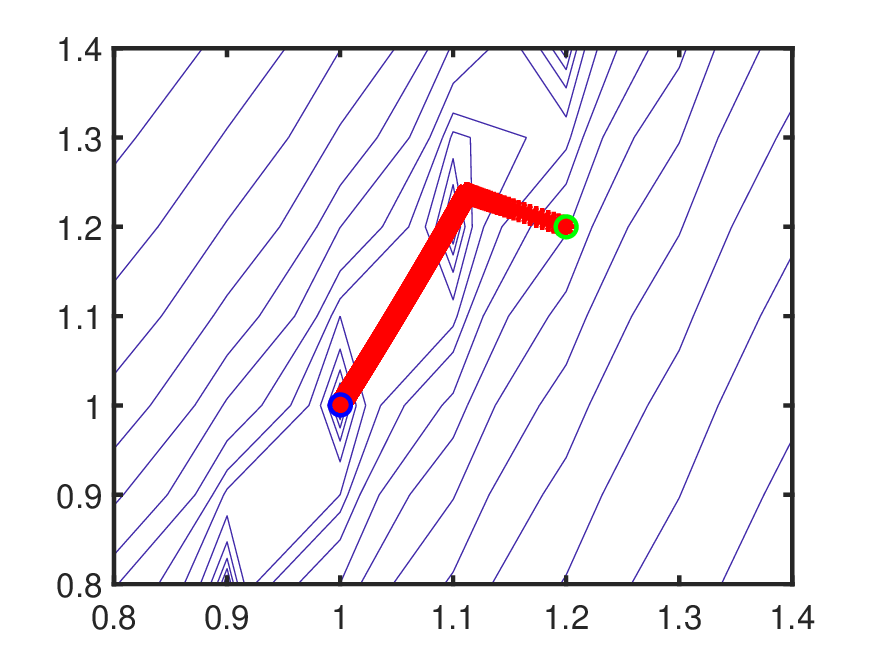}
    }\quad    
    \subfigure[DY]{
    \includegraphics[width = 0.4\textwidth]{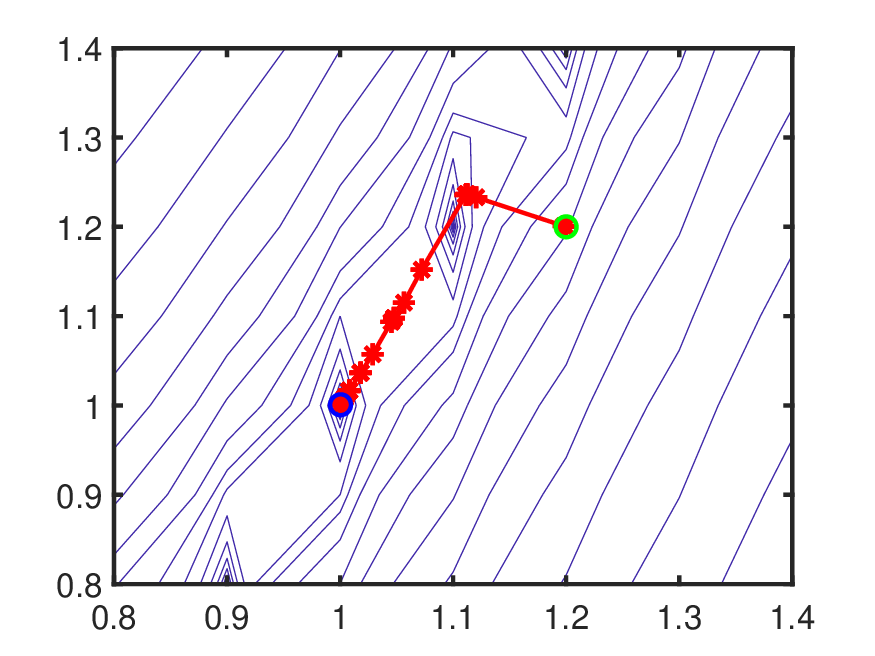}
    }\\
    \subfigure[PR]{
    \includegraphics[width = 0.4\textwidth]{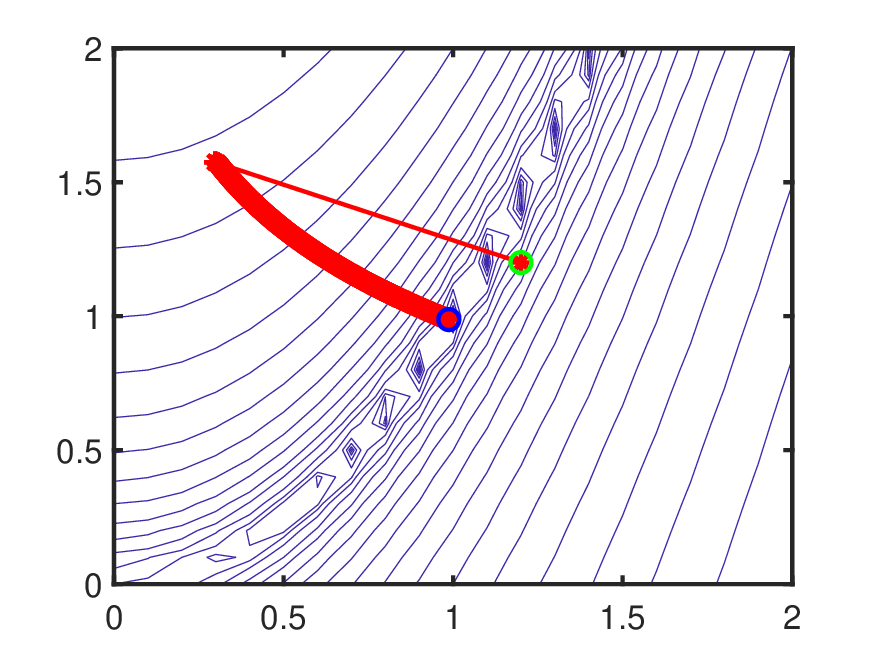}
    }\quad
    \subfigure[HS]{
    \includegraphics[width = 0.4\textwidth]{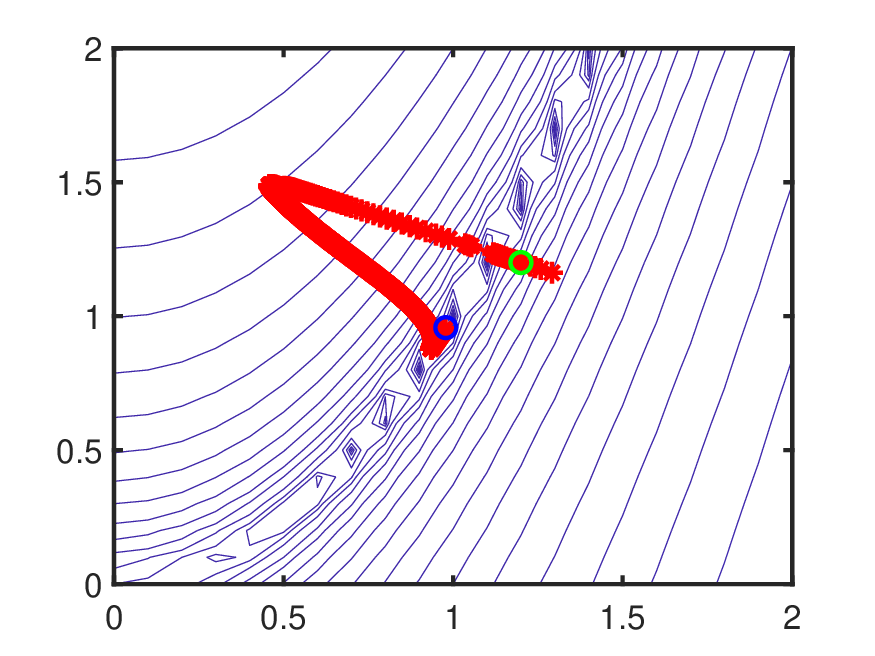}
    }
    \caption{Iterative solutions (red circles) for minimizing the Rosenbrock function via various momentum-based methods. The initial point is indicated by a green circle and the final solution is circled in blue. 
    FR and DY \mq{quickly move towards} the global minimum $(1,1)$ within a few hundred iterations, while PR and HS can not get close to $(1,1)$ even after thousands of iterations.}
    \label{fig:rosen}
\end{figure}

{We examine the momentum-based gradient descent methods.  FRGD is defined in \eqref{eq:CG FR}. We can also replace the FR momentum by PR, HS, DY, as defined in \eqref{eq:CG PR}-\eqref{eq:CG DY}, respectively. 
The step size $\alpha$ plays an important role in the \mq{performance} of these algorithms, as it is tricky to find a proper step size so that the solution does not sway across the parabolic valley $y = x^2$; otherwise it is almost impossible to converge. We carefully tune the step size of each method so that all the algorithms converge in a few hundred steps except for HS and PR which barely converge to the global minimum even after  thousands of iterations, as illustrated in Figure \ref{fig:rosen}. We also compare these four momentum-based methods to gradient descent \eqref{eq:gradient descend}, gradient descent with momentum \eqref{eq: grad momentum}, and Nesterov's gradient \eqref{eq:Nest accel grad}. All these algorithms start from the initial point $(1.2, 1.2)$ and stop when the difference of two consecutive iterates is less than $10^{-8}$ or a maximum number of iterations are achieved.  Table~\ref{tab:rosen} provides the relative errors to the global optimal solution $(1,1)$ and the computational time required by each method, showing that both FR and DY yield the highest accuracy with a reasonable amount of computational time. 
}
\begin{table}[ht]
    \centering
    \begin{tabular}{ccc}
        \hline\hline
        Method & Error&Time \\
        \hline
        GD & 3.5546e-03&2.0594e-01 \\
        GDM & 2.8409e-03&7.8236e-02 \\
        NAG& 1.0201e-03&1.7319e-02 \\
        FRGD & \textbf{7.0804e-04}&2.6686e-02 \\
        PRGD & 1.8049e-02&1.5069e-01 \\
        HSGD & 1.6816e-01&2.8645e-01 \\
        DYGD & 9.0094e-04&\textbf{4.1789e-03} \\
        \hline\hline
    \end{tabular}
    \caption{{Relative errors and computational time of various methods in minimizing the Rosenbrock function. The best results are highlighted in bold.}}
    \label{tab:rosen}
\end{table}

\subsubsection{Quadratic problem}
\begin{figure}[t]
    \centering
    \subfigure[Relative Error]{
    \includegraphics[width = 0.45\textwidth]{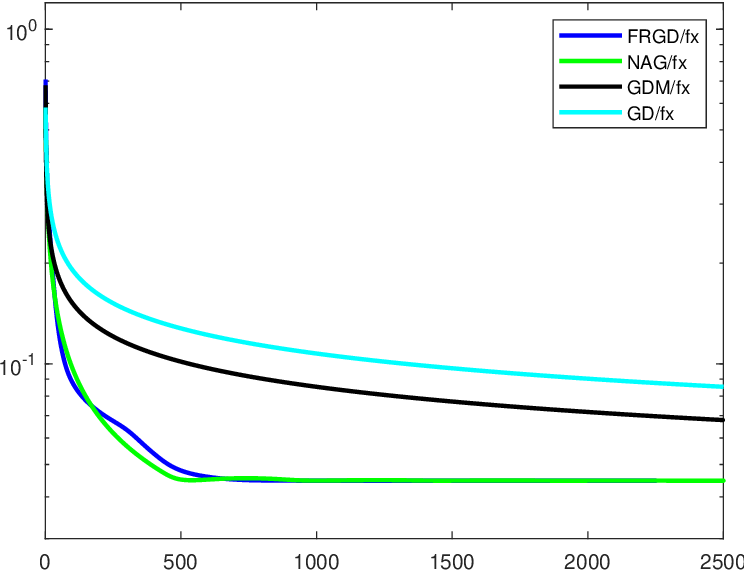}}\qquad
    \subfigure[Objective Function]{
    \includegraphics[width = 0.45\textwidth]{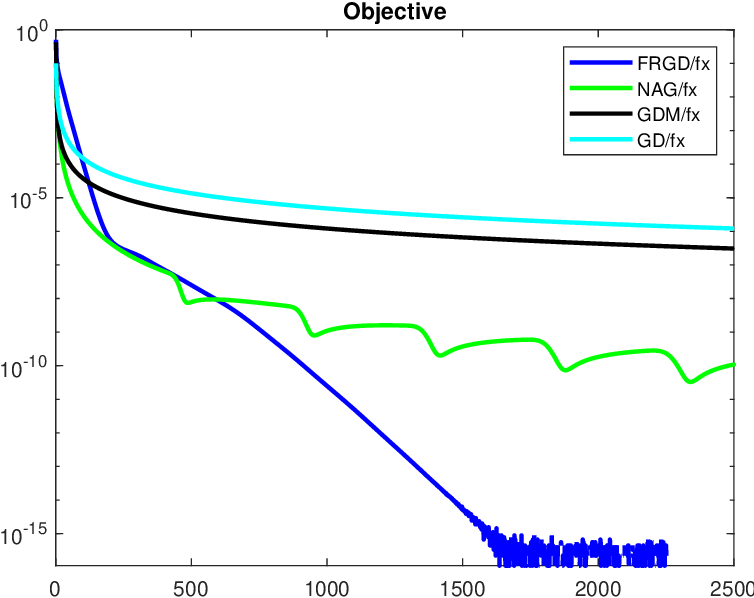}}\\
        \subfigure[Relative Error]{
    \includegraphics[width = 0.45\textwidth]{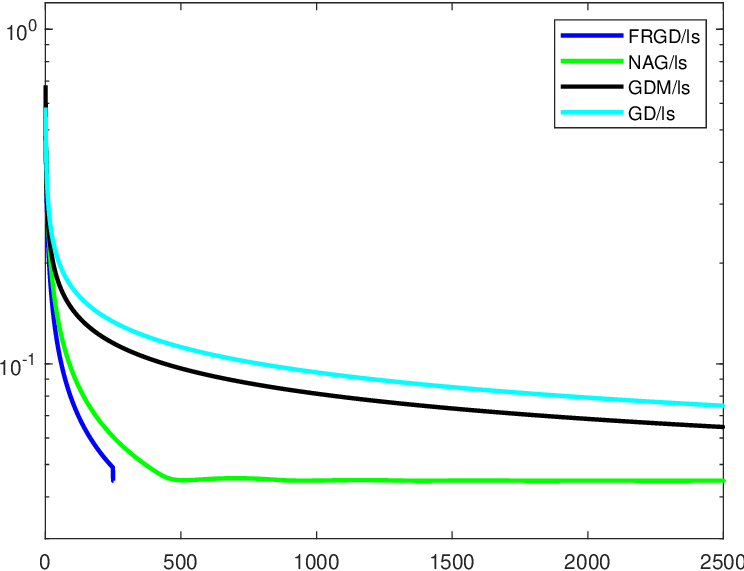}}\qquad
    \subfigure[Objective Function]{
    \includegraphics[width = 0.45\textwidth]{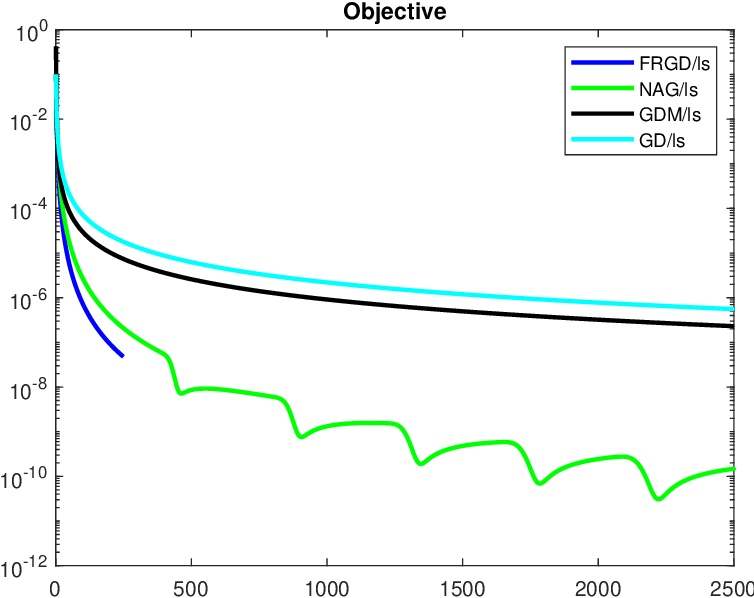}}
    \caption{Comparison of gradient based methods on a quadratic problem with a fixed step size (top) and an adaptive step size by line search (bottom).}
    \label{fig:quadratic fix}
\end{figure}

Following the work of \cite{hardt2014robustness}, 
we consider the quadratic problem \eqref{eq:quadratic} with  $A\in \mathbb{R}^{500\times 500}$ being the Laplacian matrix of a circular graph and $\mathbf{b\in \mathbb{R}^{500}}$ being a vector whose first entry is $1$ and  the remaining entries are $0$. 
It is straightforward to verify that $ g(\h x) $ is convex with Lipschitz constant 4. We
compare GD  \eqref{eq:gradient descend}, GDM \eqref{eq: grad momentum} with a fixed value of $\beta = 0.9$,  NAG  \eqref{eq:Nest accel grad}, and  FRGD \eqref{eq:CG FR}
in terms of relative errors to the ground truth and objective decay. For each competing method, we consider two ways to choose  $\alpha^{(l+1)}$: a fixed value of $0.3$ and an adaptive update via line search \eqref{eq:steepest descend}, indicated by ``fx'' and ``ls'' respectively. For example, FRGD/fx refers to FRGD method with a fixed value, and FRGD/ls refers to FRGD with updating $\alpha^{(l+1)}$ by an exact line search. 
Note that  FRGD/ls is equivalent to the conjugate gradient for any quadratic problem.

We plot the relative errors and the objective functions with respect to iteration numbers of all the competing methods  in  Figure. \ref{fig:quadratic fix}. All the plots are in a logarithmic scale. 
As expected,  GDM yields slightly better performance than SD/GD, while NAG converges significantly faster than GDM, but in an  oscillatory manner.
It is worth noting in  Figure. \ref{fig:quadratic fix} (b)  that the objective values of GD/fx, GDM/fx, NAG/fx (with a fixed step size) become stagnant after  500 iterations, whereas FRGD/fx continues to decay until the machine accuracy.
When the step size is adaptive, FRGD/ls reduces to the classic conjugate gradient that quickly falls into a local minimum, while all the other algorithms require more iterations to converge. In summary, FRGD converges at a rate much faster than regular GD and its variants.




\section{Minimizing the sum of two functions}\label{sect:min-2}
In this section, we focus on minimizing  the sum of two functions defined in \eqref{eq:proximal setup}. Specifically, we consider two different functions of $f$: the $\ell_1$ norm $\|\h x\|_1$ and the $\ell_1-\ell_2$ regularization $\|\h x\|_1-\|\h x\|_2,$ to promote the sparsity of the vector $\h x.$ As $f$  is  not differentiable, we adopt the regular subdifferential for a general (not necessary convex) function \cite[Definition 8.3]{rockafellar2009variational}, {defined by
\begin{align}
    \partial  f(\h x) = \left\{\mathbf{p}|\lim_{\mathbf{z}\rightarrow\h x} {f(\mathbf{z}) -  f(\h x) - \mathbf{p}\trans  (\mathbf{z-x}) \over \|\h z-\h x\|}\geq 0\right\},
\end{align}}
instead of the standard gradient $\nabla f.$   We discretize the gradient flow 
\begin{align}\label{eq:gradient flow}
    \frac{d}{dt} \h x(t) \in -\lambda\partial f({\h x(t)}) - \nabla g(\h x(t)),
\end{align}
that minimizes \eqref{eq:proximal setup} by a semi-implicit scheme as follows,
\begin{align}\label{eq:gradient flow discretized}
    \frac{\h x^{(l+1)} - \h x^{(l)} }{\delta}  \in -\lambda\partial f({\h x^{(l+1)}}) - \nabla g(\h x^{(l)}),
\end{align}
where $\delta>0$ is a step size. The iteration of \eqref{eq:gradient flow discretized} is often referred to as  forward-backward splitting  \cite{combettes2005signal}, as one uses a forward solution in $\nabla g$ and a backward one in $\partial f$. After
rearranging \eqref{eq:gradient flow discretized}, we obtain
\[
\h x^{(l+1)} \in \big(I + \delta\lambda\partial f\big)^{-1} \big( \h x^{(l)} - \nabla g(\h x^{(l)})\big),
\]
which implies that $\h x^{(l+1)}$ is an optimal solution to
\begin{equation}
 \h x^{(l+1)} \in \argmin_{\h x} \delta\lambda f(\h x) + \frac 1 2 \|\h x-\h x^{(l)}+\delta \nabla g(\h x^{(l)})\|_2^2.   \label{eq:before_prox}
\end{equation}
The solution to \eqref{eq:before_prox} can be expressed by the corresponding proximal operator. Recall that a proximal operator \cite{parikh2014proximal}   of a functional $J(\cdot)$ with a positive parameter $\mu>0$ is defined by 
\begin{equation}\label{eq:prox-def}
\mathbf{prox}_{J} (\h x; \mu) = \argmin_{\h y} \Big(\mu J(\h y)+\frac 1 2 \|\h x-\h y\|_{2}^{2}\Big).
\end{equation}
Now by relating Equation. \eqref{eq:prox-def} \eqref{eq:before_prox}, we have an iterative update,
\begin{align}
    \h x^{(l+1)} \in \mathbf{prox}_{f}\left( \h x^{(l)} -\delta \nabla g(\h x^{(l)});\delta\lambda\right).
    \label{eq:prox def}
\end{align}
For $f(\h x) = \|\h x\|_{1}$, its proximal operator is given by
\begin{align}
 \label{eq:l_1 prox} \mathbf{prox}_{\ell_1}(\h x;\mu) = \sign(\h x)\circ \max(|\h x|-\mu,0), 
\end{align}
where $\circ$ denotes the Hadamard operator for componentwise operation.
As the proximal operator for $\ell_1$
 is called soft shrinkage, the corresponding iteration \eqref{eq:prox def}  is referred to as iterative soft-thresholding algorithm (ISTA) \cite{chambolle1998nonlinear,figueiredo2003algorithm,daubechies2004iterative,hale2007fixed,vonesch2007fast,wright2009sparse}. One accelerated scheme of ISTA is called fast iterative soft-thresholding algorithm (FISTA) \cite{beck2009fast}. It is a momentum based algorithm that utilized  the Nesterov's update \eqref{eq:Nest accel grad} on the step size, having the  form, 
\begin{align}
    \begin{cases}
    t^{(l+1)} &= \frac{1 +\sqrt{4(t^{(l)})^{2} + 1}}{2}\\
    \h y^{(l+1)} &= \h x^{(l)} + \frac{t^{(l)}-1}{t^{(l+1)}} (\h x^{(l)} - \h x^{(l-1)})\\
    \h x^{(l+1)} &= \mathbf{prox}_{\ell_1} \left( \mathbf{y}^{(l+1)} -\delta \nabla g(\mathbf{y}^{(l+1)}); \delta\lambda\right).
    \end{cases}\label{eq:update FISTA}
\end{align}
The momentum term in FISTA is proven to be efficient, but the algorithm exhibits oscillatory patterns during the minimization process. To have a guaranteed descent, the accelerated proximal gradient (APG)  algorithm \cite{li2015accelerated} compares the objective function at two proximal solutions and selects the smaller one. In short, the APG algorithm
goes as follows,
\begin{align}
    \begin{cases}
    t^{(l+1)} &= \frac{1+\sqrt{4(t^{(l)})^{2} + 1}}{2}\\
    \h y^{(l+1)} &= \h x^{(l)} +\frac{t^{(l)}}{t^{(l+1)}} (\h u^{(l)} - \h x^{(l)})+\frac{t^{(l)}-1}{t^{(l+1)}} (\h x^{(l)} - \h  x^{(l-1)})\\
    \h u^{(l+1)} &= \mathbf{prox}_{f} \left( \mathbf{y}^{(l+1)} -\delta \nabla g(\mathbf{y}^{(l+1)}); \delta\lambda\right)\\
    \h v^{(l+1)} &= \mathbf{prox}_{f} \left( \mathbf{x}^{(l)} -\delta \nabla g(\mathbf{x}^{(l)}); \delta\lambda\right)\\
    \h x^{(l+1)} &= \argmin\limits_{\h z \in \{\h u^{(l+1)}, \h v^{(l+1)}\}} \lambda f(\h z) + g(\h z).
    \end{cases}\label{eq:APG update}
\end{align}

We propose to combine adaptive momentum formula and FISTA for minimizing the general problem of \eqref{eq:proximal setup}.  In particular, we replace the FISTA momentum update \eqref{eq:update FISTA} in terms of FR, thus leading to
\begin{align}
    \begin{cases}
        \beta^{(l+1)} &= \frac{\|\nabla g(\h x^{(l)})\|^{2}}{\|\nabla g(\h x^{(l-1)})\|^{2}}, \\
        \mathbf{y}^{(l+1)} &= \h x^{(l)} +  \beta^{(l+1)}(\h x^{(l)} - \h x^{(l-1)}),\\
        \h x^{(l+1)} &\in \mathbf{prox}_{f} \left( \mathbf{y}^{(l+1)} -\delta \nabla g(\mathbf{y}^{(l+1)}); \delta\lambda\right).
    \end{cases}\label{eq:update FR}
\end{align}
 Similarly we can use other momentum terms given in \eqref{eq:CG PR}-\eqref{eq:CG DY}. The proximal operator for the $\ell_1$ norm is given in \eqref{eq:l_1 prox}, while the proximal operator for $\ell_{1}-\ell_{2}$  \cite{louY18} can be defined separately  into the following cases,
\begin{itemize}
    \item If $\|\bm y \|_{\infty} > \lambda$, one has $\mathbf{prox}_{\ell_{1-2}}(\bm y;\lambda) = \frac{\bm z (\|\bm z\|_{2} + \lambda)}{ \|\bm z\|_{2} }$, where $\bm z = \mathbf{prox}_{\ell_1}(\bm y;\lambda)$.
    \item If $\|\bm y \|_{\infty} \leq \lambda$, then $\bm c^{*}:=\mathbf{prox}_{\ell_{1-2}}(\bm y;\lambda)$ is an optimal solution if and only if   $ c^{*}_{i} = 0$ for $|y_{i}| < \|\bm y \|_{\infty}$, $\|\bm c^{*}\|_{2} = \|\bm y \|_{\infty},$ and $ c^{*}_{i}y_{i} \geq 0$ for all $i$. The optimality condition implies infinitely many solutions of $\bm c^*,$ among which we choose $c^{*}_{i} = \sign(y_{i}) \|\bm y\|_{\infty}$ for the smallest $i$ satisfies $|y_{i}| = \|\bm y \|_{\infty}$ and the rest coefficients set to be zero.
\end{itemize}


In what follows, we present  experimental results on  the convex $\ell_1$ minimization in Section. \ref{sect:exp_l1} and the non-convex $\ell_1-\ell_2$ minimization in Section. \ref{sect:exp_l12}, respectively.

\subsection{Convex $\ell_{1}$ Minimization}
\label{sect:exp_l1}

We test the performance of various methods to minimize the $\ell_1$ norm with the least-squares fitting term, i.e., 
\begin{equation}\label{eq:l1-uncon}
\h x^*=\argmin_{\h x}\lambda\|\h x\|_1 + \frac 1 2 \|A\h x-\h b\|_2^2.
\end{equation} 
We generate the sensing matrix $A$ from Gaussian random matrices and  a ground-truth sparse vector $\h x$  of sparsity $5$. Compressive sensing often involves an under-determined linear system, which implies that the matrix $A$ has more columns than rows (a fat matrix). Here we examine both under-determined (fat) and over-determined (tall) matrices with size $256\times 1024$ and $1024\times 256,$ respectively.  

We consider two ways to generate the data vector $\h b.$ One is a standard \textit{sparse recovery} setting, in which $\h b$ is obtained by matrix-vector multiplication ($A\h x$) with additive Gaussian noise of $30$ dB.
{Another is referred to as a \textit{constructed} case following the work of ~\cite{lorenz2011constructing}. In particular, we construct a data vector $\h b$ such that a specific sparse vector $\h x^*$ is a stationary point of \eqref{eq:l1-uncon} when given a positive parameter $\lambda$ and a matrix $A$.}
Any non-zero stationary point satisfies the following first-order optimality condition: 
\begin{equation}\label{eq:uncon_construct}
\textstyle \lambda \mathbf{p}^*+A\trans(A\h x^*-\h b)=\h 0,
\end{equation}
where $\mathbf{p}^*\in\partial\|\h x^*\|_1$. Denote Sign$(\cdot)$ as the multi-valued sign, \emph{i.e.},
\begin{equation}
\mathbf{y}\in\mbox{Sign}(\h x)\ \Longleftrightarrow\ y_i\left\{\begin{array}{ll}
=1, & \mbox{if} \  x_i>0,\\
=-1, & \mbox{if} \  x_i<0,\\
\in[-1,1], & \mbox{if} \ x_i=0.
\end{array}\right.
\end{equation}
Given $A,~\lambda$, and  $\h x^*$, we want to find $\h x\in \mbox{Sign}(\h x^*)$ and $\h x \in \mbox{Range}(A\trans)$. 
If $\mathbf{y}$ satisfies $A\trans \mathbf{y}=\h x $ and $\mathbf{b}$ is defined by $\mathbf{b}=\lambda \mathbf{y}+A\h x^*$, then $\h x^*$ is a stationary point to~\eqref{eq:l1-uncon}. 
To find $\h x\in\mathbb R^N$,
we adopt the iteration
\begin{align}\label{eq:constructedl1}
	 \h x^{(k+1)} = P_{\mbox{Sign}(\h x^*)}\left(UU\trans\left(\h x^{(k)} \right) \right), 
\end{align}
until  a stopping criterion is reached. Please refer to \cite{lorenz2011constructing} for more details.


\begin{figure}[t]
  \centering
  \subfigure[Relative Error]{
    \includegraphics[width = 0.45\textwidth]{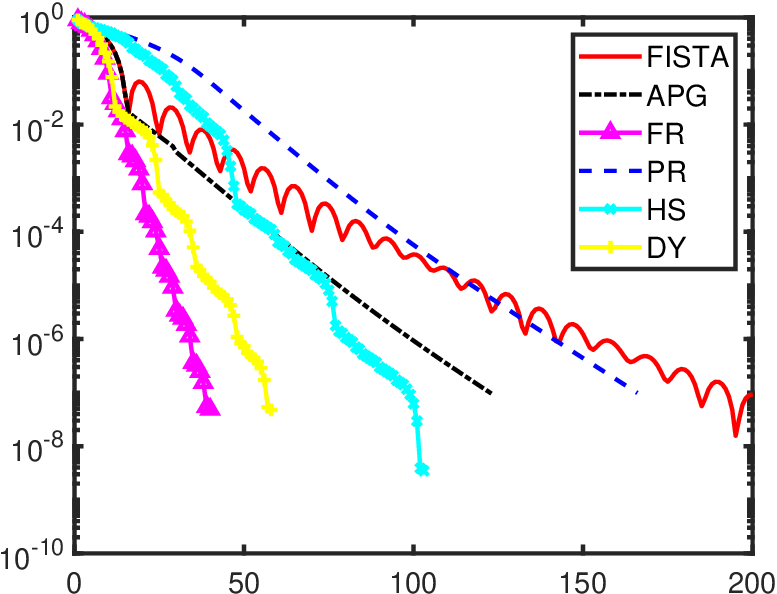}}\qquad
    \subfigure[Objective Function]{
    \includegraphics[width = 0.45\textwidth]{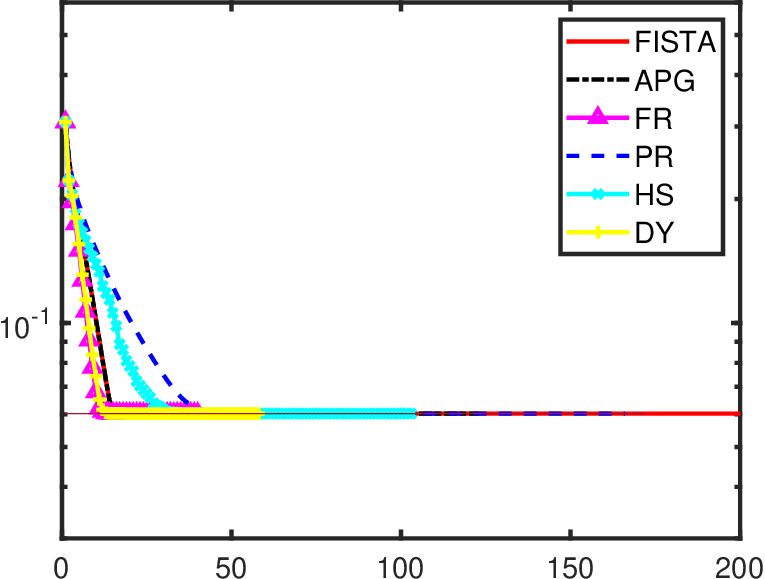}}\\
      \subfigure[Relative Error]{
    \includegraphics[width = 0.45\textwidth]{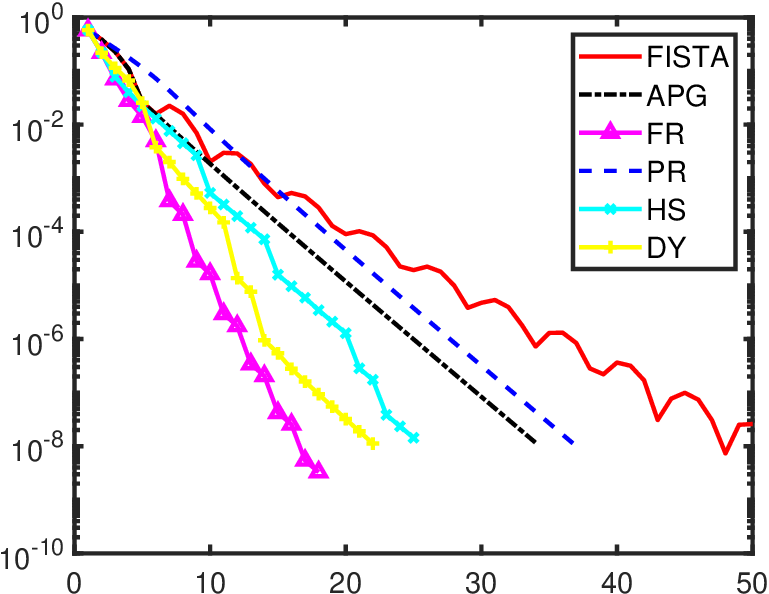}}\qquad
    \subfigure[Objective Function]{
    \includegraphics[width = 0.45\textwidth]{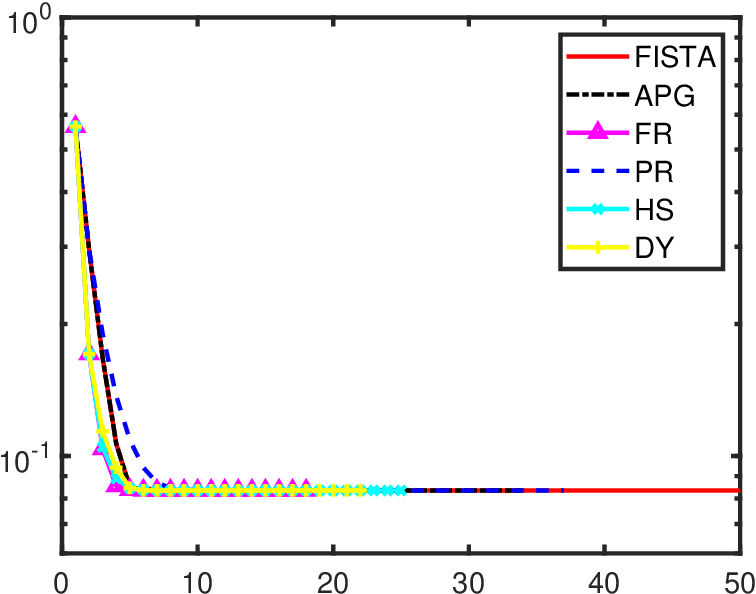}}
    \caption{Comparison of  $\ell_{1}$ minimization methods using a $256 \times 1024$ (top) and $1024\times 256$ (bottom) matrix $A$ in a constructed case. }
    \label{fig:l1con fat}
\end{figure}

The constructed setting is examined in
 Figure. \ref{fig:l1con fat} that contains both  fat and tall matrices. We use a fixed value of $\lambda$ for all the algorithms so that they solve the same problem and we 
 tune $\delta$ to achieve the fastest convergence. Specifically we choose the best $\delta$ among the set $\{10^{-4}$, $10^{-3}$, $\cdots$, $10^{1}\}$ that achieves the smallest objective function value when convergent.
The proposed   method with the FR momentum converges the fastest among all the other methods. 
FISTA initially converges faster than  APG and PR/HS, while it always oscillates no matter whether the matrix $A$ is fat or tall.


\begin{figure}[t]
  \centering
  \subfigure[Relative Error]{
    \includegraphics[width = 0.45\textwidth]{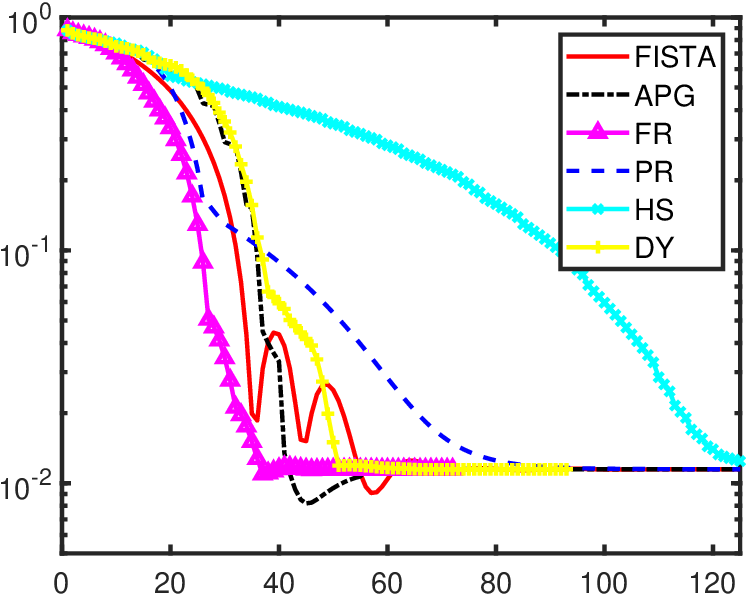}}\qquad
    \subfigure[Objective Function]{
f    \includegraphics[width = 0.45\textwidth]{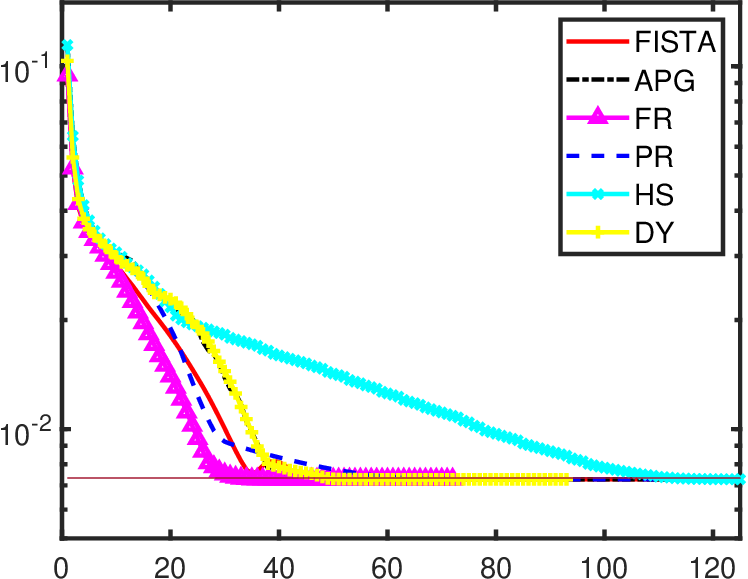}}\\
      \subfigure[Relative Error]{
    \includegraphics[width = 0.45\textwidth]{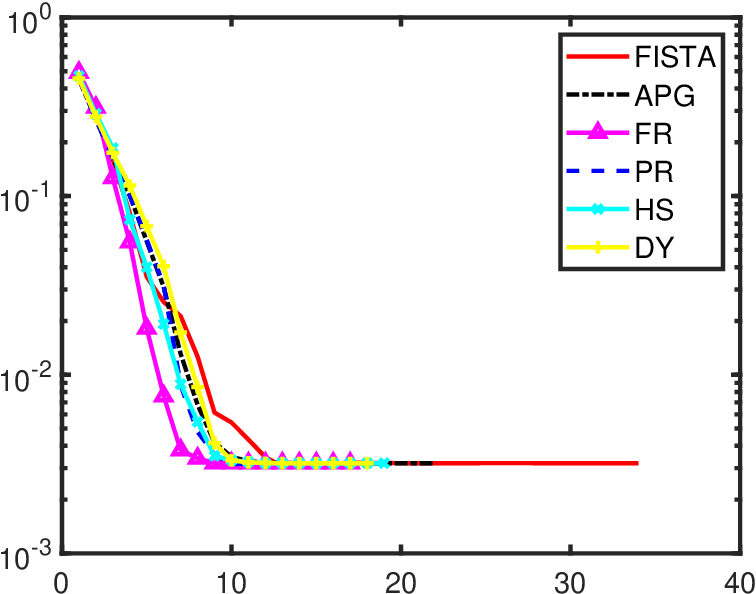}}\qquad
    \subfigure[Objective Function]{
    \includegraphics[width = 0.45\textwidth]{L1WithTallMatix20220630/L1_error_1024by256_30dB_rand.eps}}
    \caption{Comparison of  $\ell_{1}$ minimization methods using a $256 \times 1024$ (top) and $1024\times 256$ (bottom) matrix $A$ in a standard sparse recovery setting.}
    \label{fig:l1rand fat}
\end{figure}

We examine a standard sparse recovery setting where the data $\h b$ is obtained by matrix-vector multiplication with additive noise. Again two types of matrices are considered, a  $256\times 1024$ (fat) matrix and a $1024\times 256$ (tall)  one. We use the proposed algorithm with a small step size $\delta = 10^{-3}$ to find the optimal $\lambda$ value among the set $\{10^{-4}$, $10^{-3}$, $\cdots$, $10^{1}\}$  that yields the smallest objective function value. Then we fix this optimal $\lambda$ for all the competing algorithms while tuning the $\delta$ parameter in the same way as in the constructed case. The results are {presented} in  Figure. \ref{fig:l1rand fat}.
We   observe that our proposed algorithm has still some advantages over FISTA and APG. 
Interestingly, in  Figure. \ref{fig:l1rand fat} (a), we observe that the APG algorithm  performs worse than FISTA until about 40th iteration due to the oscillatory nature of FISTA.
This phenomenon implies that APG is less robust than FISTA in certain applications, which is somewhat counter-intuitive. In this set of experiments, all the methods can not reach the accuracy of $10^{-3}$ in terms of relative errors, as compared to the constructed cases ($10^{-8}$). This is because the ground-truth solution may not be a stationary point to the corresponding minimization problem. 

{The CPU time required by various methods is reported in Table~\ref{tab:CPU time l1}, which includes different shapes of the sensing matrix $A$ (a $256\times 1024$ fat matrix or a $1024\times 256$ tall matrix) as well two testing scenarios (a constructed case and a standard setting in spare recovery). The computational time of a larger dimension of $1024\times 4096$ and $4096\times 1024$ is recorded in Table~\ref{tab:CPU time l1-large}. 
Both FR and DY are the winners in terms of computational efficiency, which is consistent with our observation in the Rosebrock function. }

 \begin{table}[]
     \centering
     \begin{tabular}{ccccc}
     \hline\hline
          Method & Constructed fat& Standard fat &constructed tall  &Standard tall \\
     \hline
          FISTA & 1.5727e-01& 2.1449e-01 &2.6440e-02 & 1.7032e-02\\
          APG & 1.8135e-01 & 3.7542e-01 & 3.5753e-02 & 2.1801e-02\\
          FR & 6.7575e-02& 1.2839e-01 &1.9097e-02& 1.7562e-02\\
          PR & 2.6586e-01& 3.1863e-01 &3.8764e-02& 1.9722e-02\\
          HS & 1.6053e-01& 3.0457e-01 &2.5498e-02& 1.8789e-02\\
          DY & 9.2728e-02& 1.5137e-01 &2.1915e-02& 1.7746e-02\\
          \hline
     \end{tabular}
     \caption{{CPU time for various $\ell_{1}$ minimization methods 
     using a $256 \times 1024$ (fat) matrix $A$ or a $1024\times 256$ (tall) matrix under either a constructed or standard setting.} }
     \label{tab:CPU time l1}
 \end{table}

 \begin{table}[]
     \centering
     \begin{tabular}{ccccc}
     \hline\hline
          Method & Constructed fat& Standard fat &constructed tall  &Standard tall \\
     \hline
          FISTA& 4.6661e+00 & 4.6932e+00 & 8.7736e-01 & 6.7252e-01\\
          APG& 4.8702e+00 & 4.9007e+00  & 1.0459e+00 & 6.9235e-01\\
          FR& 1.6858e+00 & 2.4300e+00 &5.5734e-01& 6.4436e-01\\
          PR& 6.9300e+00 & 5.7933e+00 &1.1392e+00& 6.4442e-01\\
          HS& 3.7369e+00 & 5.9089e+00 &7.2674e-01& 7.0761e-01\\
          DY& 2.2746e+00 & 2.8957e+00 &6.1895e-01& 5.7265e-01\\
          \hline
     \end{tabular}
     \caption{{CPU time for various $\ell_{1}$ minimization methods 
     using a $1024 \times 4096$ (fat) matrix $A$ or a $4096\times 1024$ (tall) matrix under either a constructed or standard setting.} }
     \label{tab:CPU time l1-large}
 \end{table}

\subsection{Non-convex $\ell_{1}-\ell_{2}$ Minimization}
\label{sect:exp_l12}

\begin{figure}[t]
  \centering
    \subfigure[Relative Error]{
    \includegraphics[width = 0.4\textwidth]{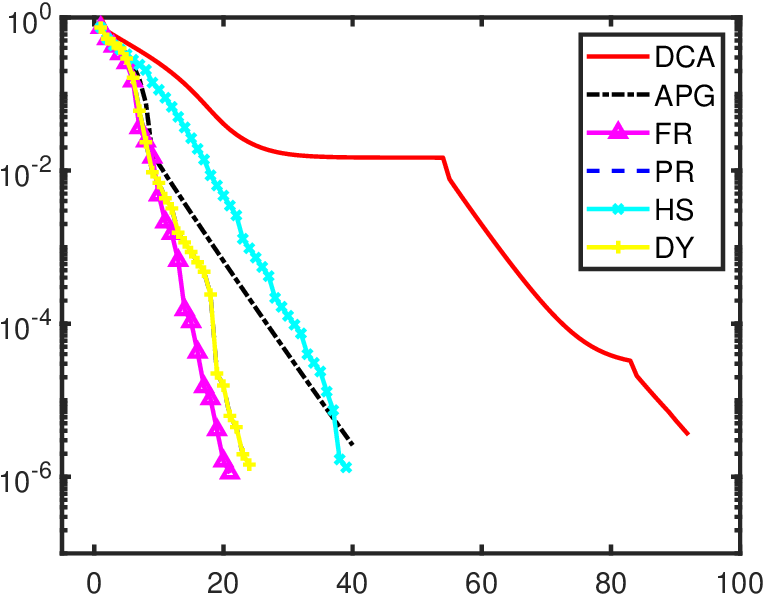}}
    \subfigure[Objective Function]{
    \includegraphics[width = 0.4\textwidth]{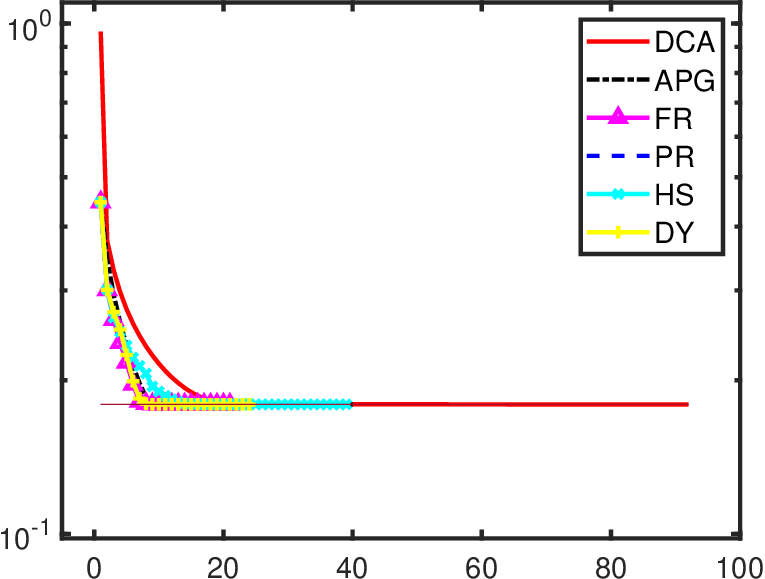}}\\
    \subfigure[Relative Error]{
    \includegraphics[width = 0.4\textwidth]{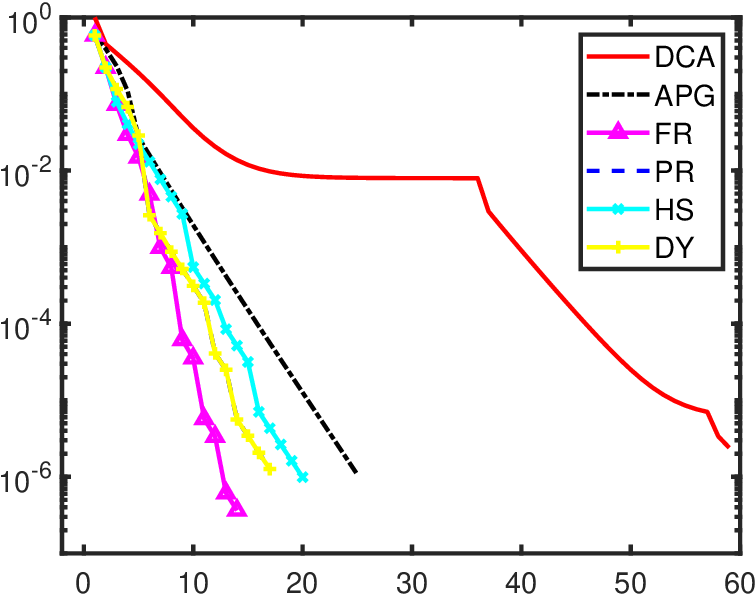}}
    \subfigure[Objective Function]{
    \includegraphics[width = 0.4\textwidth]{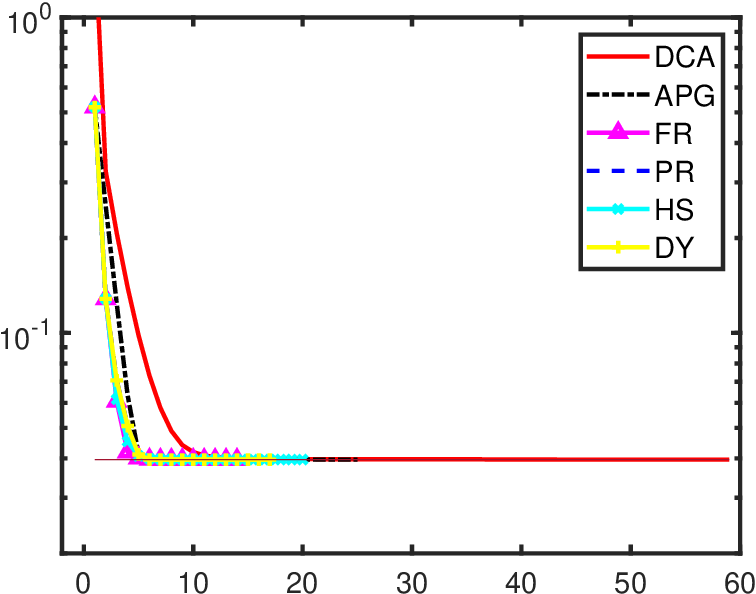}}
    \caption{Comparison of  $\ell_{1}-\ell_2$ minimization methods using a $256 \times 1024$ (top) and $1024\times 256$ (bottom) matrix $A$ in a constructed case.}
    \label{fig:l12con tall}
\end{figure}

Lastly, we consider the non-convex $\ell_{1}-\ell_{2}$ minimization problem,
\begin{align} \label{eq: obj dca}
F(\h x) = \lambda (\Vert \h x \Vert_1- \Vert \h x \Vert_2) + \frac 1 2\Vert\tensor A\h x-\h b\Vert_2^2,
\end{align}
which was originally solved by the difference of convex algorithm (DCA) \cite{TA98,phamLe2005dc}. As a baseline algorithm for comparison, we give a brief description of DCA. After decomposing $F$ into a difference of two convex functions, DCA further relies on 
the linearization at the current step $\h x^{(l)}$ to advance to the next one, i.e., 
	\begin{equation}\label{eq:iter}
	\h x^{(l+1)} =\argmin_{\h x\in\mathbb R^{n}} \frac{1}{2}\|\tensor A\h x -\h b\|_2^2   + \lambda\| \h x\|_1-\left\langle \h x,\frac{\lambda \h x^{(l)}}{\|\h x^{(l)}\|_2}\right\rangle.
	\end{equation}


To generate a constructed solution for the $\ell_{1}-\ell_{2}$ problem, we only need to replace the iteration \eqref{eq:constructedl1} for constructing the $\ell_1$ solution by
\begin{align}\label{eq:constructed-l12}
	 \h x^{(k+1)} = P_{\mbox{Sign}(\h x^*)}\left(UU\trans\left(\h x^{(k)} - \frac{\h x^*} {\|\h x^*\|_2} \right) + {\frac{\h x^*} {\|\h x^*\|_2}} \right).
\end{align}
Due to the non-convex nature of $F(\h x)$, the iteration \eqref{eq:constructed-l12} may not converge and $\h x^*$ may not exist. 
The results of $\ell_1-\ell_2$ minimization methods on a constructed case are illustrated in  Figure. \ref{fig:l12con tall} for matrix  sizes of $256\times 1024$ and $1024\times 256.$ 
Note that DCA is a doule-loop algorithm and its iteration number is counted as inner loop iterations, and yet
 the original DCA implementation \cite{yinLHX14,louYX16} is the slowest, followed by    APG. 
Our proposed algorithm is the fastest, having a clear advantage over all the  other algorithms.

 Figure. \ref{fig:l12rand tall} shows the results for a sparse recovery problem.  DCA is still the slowest, while our method is the fastest. 
 The proposed method is worse than  DCA for a tall matrix. This may  attribute to the fact that the ground-truth signal is not the optimal solution to \eqref{eq: obj dca}, and as a result, the performance is rather random.  
 {The CPU time for these $\ell_1-\ell_2$ minimization methods is listed in Table~\ref{tab:CPU time l1-l2}. We observe that all the momentum-based methods seem comparable in computational time, while DCA is the slowest.  }

\begin{figure}[t]
  \centering
  \subfigure[Relative Error]{
    \includegraphics[width = 0.4\textwidth]{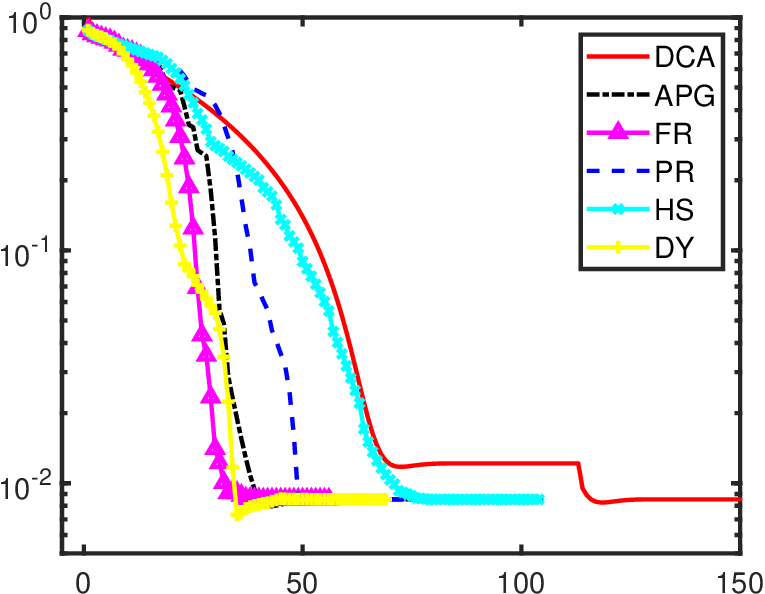}}
    \subfigure[Objective Function]{
    \includegraphics[width = 0.4\textwidth]{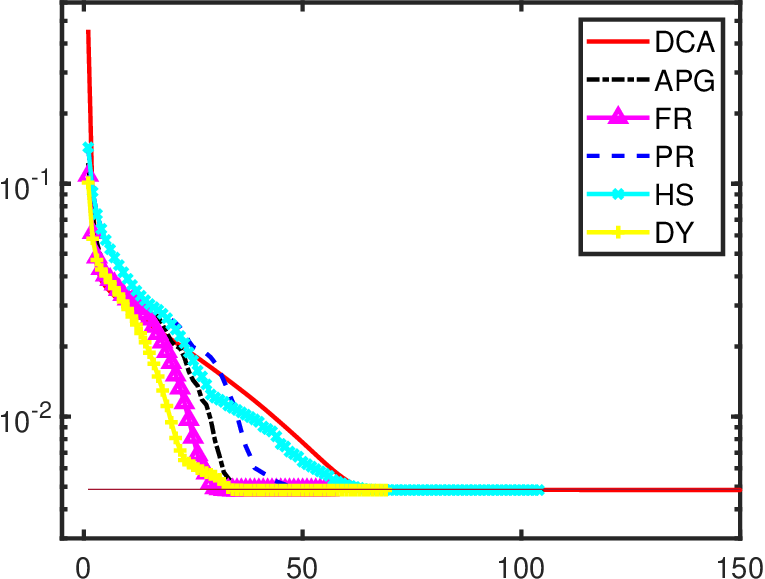}}\\
    \subfigure[Relative Error]{
    \includegraphics[width = 0.4\textwidth]{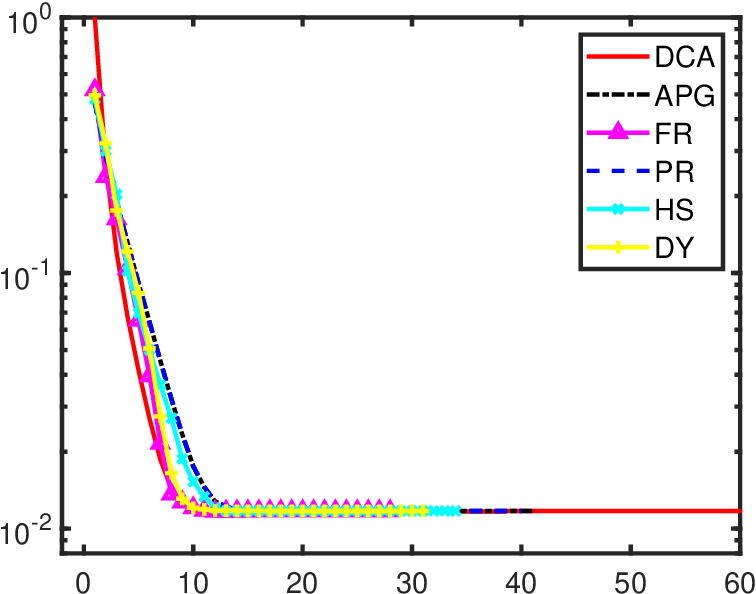}}
    \subfigure[Objective Function]{
    \includegraphics[width = 0.4\textwidth]{L12WithTallMatix20220420/L12_error_1024by256_30dB_rand.eps}}
    \caption{Comparison of  $\ell_{1}-\ell_2$ minimization methods using a $256 \times 1024$ (top) and $1024\times 256$ (bottom) matrix $A$ in a standard sparse recovery setting.}
    \label{fig:l12rand tall}
\end{figure}

 \begin{table}[]
     \centering
     \begin{tabular}{ccccc}
     \hline\hline
          Method & Constructed fat &random fat & Constructed tall &random tall \\
     \hline
          DCA & 1.2112e-02& 5.4499e-02 &7.3475e-02 &8.5999e-02\\
          APG & 9.5120e-03 & 2.1489e-01 & 3.3662e-02 & 2.1801e-02\\
          FR & 6.0010e-03& 1.2058e-01 &2.0455e-02& 6.2328e-02\\
          PR & 7.3012e-03& 1.7407e-01 &2.3123e-02& 6.1710e-02\\
          HS & 1.0967e-02& 2.4206e-01 &2.9486e-02& 5.4402e-02\\
          DY & 5.9523e-03& 1.6031e-01 &2.4117e-02& 4.4513e-02\\
          \hline
 \end{tabular}
     \caption{{CPU time for various $\ell_{1}-\ell_2$ minimization methods 
     using a $256 \times 1024$ (fat) matrix $A$ or a $1024\times 256$ (tall) matrix under either a constructed or standard setting.}}
     \label{tab:CPU time l1-l2}
 \end{table}

\section{Conclusion}\label{sect:conclude}
In this paper, we leveraged adaptive momentum from nonlinear conjugate gradient algorithms for the purpose of acceleration. Unlike the existing works that rely on line search to establish convergence of gradient-based algorithms, we proposed the use of a fix step size  and proved the convergence of FRGD on a quadratic problem. In addition, we combined the adaptive momentum with FISTA to deal with non-smooth objective function. 
The resulting algorithm has a relatively simple FISTA-like structure. We demonstrated the accelerated phenomena of the proposed approach over FISTA and APG on a convex $\ell_1$ minimization and a nonconvex $\ell_1-\ell_2$ problem for sparse recovery.

\begin{acknowledgements} 
YL acknowledges the support from NSF CAREER DMS-1846690. BW is supported by NSF DMS-1924935, DMS-1952339, DMS-2152762, and DMS-2208361. BW also acknowledges support from the office of Science of the department of energy under grant number DE-SC0021142 and DE-SC0002722. MY was supported by the NSF DMS-2012439 and Shenzhen Science and Technology Program  ZDSYS20211021111415025. XY acknowledges the support from NSF CAREER DMS-2143915. XY was also partly supported by DOE, Office of Science, Office of Advanced
Scientific Computing Research (ASCR) as part of Multifaceted Mathematics for Rare, Extreme Events in Complex Energy and Environment Systems (MACSER). QY acknowledges the support from the NSF grant DMS-1821144.
\end{acknowledgements}

%
%

\bibliographystyle{spmpsci}      
\bibliography{reference.bib}   

\begin{thebibliography}{10}
\providecommand{\url}[1]{{#1}}
\providecommand{\urlprefix}{URL }
\expandafter\ifx\csname urlstyle\endcsname\relax
  \providecommand{\doi}[1]{DOI~\discretionary{}{}{}#1}\else
  \providecommand{\doi}{DOI~\discretionary{}{}{}\begingroup
  \urlstyle{rm}\Url}\fi

\bibitem{al1985descent}
Al-Baali, M.: Descent property and global convergence of the
  {F}letcher-{R}eeves method with inexact line search.
\newblock IMA Journal of Numerical Analysis \textbf{5}(1), 121--124 (1985)

\bibitem{andrei2008another}
Andrei, N.: Another hybrid conjugate gradient algorithm for unconstrained
  optimization.
\newblock Numerical Algorithms \textbf{47}(2), 143--156 (2008)

\bibitem{armijo1966minimization}
Armijo, L.: Minimization of functions having {L}ipschitz continuous first
  partial derivatives.
\newblock Pacific Journal of mathematics \textbf{16}(1), 1--3 (1966)

\bibitem{beck2009fast}
Beck, A., Teboulle, M.: A fast iterative shrinkage-thresholding algorithm for
  linear inverse problems.
\newblock SIAM journal on imaging sciences \textbf{2}(1), 183--202 (2009)

\bibitem{Bertsekas99}
Bertsekas, D.: Nonlinear Programming.
\newblock Athena Scientific (1999)

\bibitem{boggess2015first}
Boggess, A., Narcowich, F.J.: A first course in wavelets with Fourier analysis.
\newblock John Wiley \& Sons (2015)

\bibitem{boydPCPE11admm}
Boyd, S., Parikh, N., Chu, E., Peleato, B., Eckstein, J.: Distributed
  optimization and statistical learning via the alternating direction method of
  multipliers.
\newblock Found. Trends Mach. Learn. \textbf{3}(1), 1--122 (2011)

\bibitem{candes2008enhancing}
Cand{\`e}s, E.J., Wakin, M.B., Boyd, S.P.: Enhancing sparsity by reweighted l1
  minimization.
\newblock J. Fourier Anal. Appl. \textbf{14}(5-6), 877--905 (2008)

\bibitem{chambolle1998nonlinear}
Chambolle, A., De~Vore, R.A., Lee, N.Y., Lucier, B.J.: Nonlinear wavelet image
  processing: variational problems, compression, and noise removal through
  wavelet shrinkage.
\newblock IEEE Transactions on Image Processing \textbf{7}(3), 319--335 (1998)

\bibitem{chan2014half}
Chan, R.H., Liang, H.X.: Half-quadratic algorithm for $\ell_p$-$\ell_q $
  problems with applications to tv-$\ell_1$ image restoration and compressive
  sensing.
\newblock In: Efficient algorithms for global optimization methods in computer
  vision, pp. 78--103. Springer (2014)

\bibitem{chen2010smoothing}
Chen, X., Zhou, W.: Smoothing nonlinear conjugate gradient method for image
  restoration using nonsmooth nonconvex minimization.
\newblock SIAM Journal on Imaging Sciences \textbf{3}(4), 765--790 (2010)

\bibitem{combettes2005signal}
Combettes, P.L., Wajs, V.R.: Signal recovery by proximal forward-backward
  splitting.
\newblock Multiscale Modeling \& Simulation \textbf{4}(4), 1168--1200 (2005)

\bibitem{dai1999nonlinear}
Dai, Y.H., Yuan, Y.: A nonlinear conjugate gradient method with a strong global
  convergence property.
\newblock SIAM Journal on optimization \textbf{10}(1), 177--182 (1999)

\bibitem{dai2001efficient}
Dai, Y.h., Yuan, Y.: An efficient hybrid conjugate gradient method for
  unconstrained optimization.
\newblock Annals of Operations Research \textbf{103}(1), 33--47 (2001)

\bibitem{daubechies2004iterative}
Daubechies, I., Defrise, M., De~Mol, C.: An iterative thresholding algorithm
  for linear inverse problems with a sparsity constraint.
\newblock Communications on Pure and Applied Mathematics: A Journal Issued by
  the Courant Institute of Mathematical Sciences \textbf{57}(11), 1413--1457
  (2004)

\bibitem{donoho2006compressed}
Donoho, D.L.: Compressed sensing.
\newblock IEEE Trans. Inf. Theory \textbf{52}(4), 1289--1306 (2006)

\bibitem{figueiredo2003algorithm}
Figueiredo, M.A., Nowak, R.D.: An {EM} algorithm for wavelet-based image
  restoration.
\newblock IEEE Transactions on Image Processing \textbf{12}(8), 906--916 (2003)

\bibitem{fletcher1964function}
Fletcher, R., Reeves, C.M.: Function minimization by conjugate gradients.
\newblock The computer journal \textbf{7}(2), 149--154 (1964)

\bibitem{gilbert1992global}
Gilbert, J.C., Nocedal, J.: Global convergence properties of conjugate gradient
  methods for optimization.
\newblock SIAM Journal on optimization \textbf{2}(1), 21--42 (1992)

\bibitem{giselsson2014monotonicity}
Giselsson, P., Boyd, S.: Monotonicity and restart in fast gradient methods.
\newblock In: 53rd IEEE Conference on Decision and Control, pp. 5058--5063.
  IEEE (2014)

\bibitem{golub1999inexact}
Golub, G.H., Ye, Q.: Inexact preconditioned conjugate gradient method with
  inner-outer iteration.
\newblock SIAM Journal on Scientific Computing \textbf{21}(4), 1305--1320
  (1999)

\bibitem{GuoLLtransform18}
Guo, L., Li, J., Liu, Y.: Stochastic collocation methods via minimisation of
  the transformed $l_1$-penalty.
\newblock East Asian Journal on Applied Mathematics \textbf{8}(3), 566--585
  (2018)

\bibitem{guo2021novel}
Guo, W., Lou, Y., Qin, J., Yan, M.: A novel regularization based on the error
  function for sparse recovery.
\newblock Journal of Scientific Computing \textbf{87}(1), 1--22 (2021)

\bibitem{hager2005new}
Hager, W.W., Zhang, H.: A new conjugate gradient method with guaranteed descent
  and an efficient line search.
\newblock SIAM Journal on optimization \textbf{16}(1), 170--192 (2005)

\bibitem{hager2006survey}
Hager, W.W., Zhang, H.: A survey of nonlinear conjugate gradient methods.
\newblock Pacific journal of Optimization \textbf{2}(1), 35--58 (2006)

\bibitem{hale2007fixed}
Hale, E.T., Yin, W., Zhang, Y.: A fixed-point continuation method for
  l1-regularized minimization with applications to compressed sensing.
\newblock CAAM TR07-07, Rice University \textbf{43}, 44 (2007)

\bibitem{hardt2014robustness}
Hardt, M.: Robustness versus acceleration (2014).
\newblock
  \urlprefix\url{http://blog.mrtz.org/2014/08/18/robustness-versus-acceleration.html}

\bibitem{hermey1999fitting}
Hermey, D., Watson, G.A.: Fitting data with errors in all variables using the
  huber m-estimator.
\newblock SIAM Journal on Scientific Computing \textbf{20}(4), 1276--1298
  (1999)

\bibitem{Hestenes&Stiefel:1952}
Hestenes, M.R., Stiefel, E.: Methods of conjugate gradients for solving linear
  systems.
\newblock Journal of research of the National Bureau of Standards \textbf{49},
  409--436 (1952)

\bibitem{hestenes1952methods}
Hestenes, M.R., Stiefel, E., et~al.: Methods of conjugate gradients for solving
  linear systems, vol.~49.
\newblock NBS Washington, DC (1952)

\bibitem{huang2017majorization}
Huang, G., Lanza, A., Morigi, S., Reichel, L., Sgallari, F.:
  Majorization--minimization generalized krylov subspace methods for
  $\ell_p$-$\ell_q$ optimization applied to image restoration.
\newblock BIT Numerical Mathematics \textbf{57}(2), 351--378 (2017)

\bibitem{huang2015nonconvex}
Huang, X.L., Shi, L., Yan, M.: Nonconvex sorted $\ell_1 $ minimization for
  sparse approximation.
\newblock Journal of the Operations Research Society of China \textbf{3}(2),
  207--229 (2015)

\bibitem{huber1987place}
Huber, P.J.: The place of the l1-norm in robust estimation.
\newblock Computational Statistics \& Data Analysis \textbf{5}(4), 255--262
  (1987)

\bibitem{lanzageneralized}
Lanza, A., Morigi, S., Reichel, L., Sgallari, F.: A generalized {K}rylov
  subspace method for $\ell_p$-$\ell_q$ minimization.
\newblock SIAM Journal on Scientific Computing \textbf{37}(5), S30--S50 (2015).
\newblock \doi{10.1137/140967982}.
\newblock \urlprefix\url{https://doi.org/10.1137/140967982}

\bibitem{li2015accelerated}
Li, H., Lin, Z.: Accelerated proximal gradient methods for nonconvex
  programming.
\newblock Advances in Neural Information Processing Systems \textbf{28},
  379--387 (2015)

\bibitem{liesen2013krylov2.3}
Liesen, J., Strakos, Z.: Mathematical characterisation of some {K}rylov
  subspace methods.
\newblock pp. 23--26. Oxford University Press (2013)

\bibitem{lorenz2011constructing}
Lorenz, D.A.: Constructing test instances for basis pursuit denoising.
\newblock IEEE Trans. Signal Process. \textbf{61}(5), 1210--1214 (2013)

\bibitem{louY18}
Lou, Y., Yan, M.: Fast l1-l2 minimization via a proximal operator.
\newblock J. Sci. Comput. \textbf{74}(2), 767--785 (2018)

\bibitem{louYHX14}
Lou, Y., Yin, P., He, Q., Xin, J.: Computing sparse representation in a highly
  coherent dictionary based on difference of $ {L_1} $ and $ {L_2 }$.
\newblock J. Sci. Comput. \textbf{64}(1), 178--196 (2015)

\bibitem{louYX16}
Lou, Y., Yin, P., Xin, J.: Point source super-resolution via non-convex l1
  based methods.
\newblock J. Sci. Comput. \textbf{68}, 1082--1100 (2016)

\bibitem{lu2014iterative}
Lu, Z.: Iterative reweighted minimization methods for $\ell_p$ regularized
  unconstrained nonlinear programming.
\newblock Mathematical Programming \textbf{147}(1), 277--307 (2014)

\bibitem{lv2009unified}
Lv, J., Fan, Y., et~al.: A unified approach to model selection and sparse
  recovery using regularized least squares.
\newblock Annals of Stat. \textbf{37}(6A), 3498--3528 (2009)

\bibitem{narushima2013smoothing}
Narushima, Y.: A smoothing conjugate gradient method for solving systems of
  nonsmooth equations.
\newblock Applied Mathematics and Computation \textbf{219}(16), 8646--8655
  (2013)

\bibitem{natarajan95}
Natarajan, B.K.: Sparse approximate solutions to linear systems.
\newblock SIAM J. Comput. \textbf{24}(2), 227--234 (1995)

\bibitem{nemirovski1985optimal}
Nemirovski, A.S., Nesterov, Y.E.: Optimal methods of smooth convex
  minimization.
\newblock Zhurnal Vychislitel'noi Matematiki i Matematicheskoi Fiziki
  \textbf{25}(3), 356--369 (1985)

\bibitem{nesterov1983method}
Nesterov, Y.: A method of solving a convex programming problem with convergence
  rate o (1/k2).
\newblock In: Soviet Mathematics Doklady, vol.~27, pp. 372--376 (1983)

\bibitem{nesterov2003introductory}
Nesterov, Y.: Introductory lectures on convex optimization: A basic course,
  vol.~87.
\newblock Springer Science \& Business Media (2003)

\bibitem{nocedal2006numerical}
Nocedal, J., Wright, S.: Numerical optimization.
\newblock Springer Science \& Business Media (2006)

\bibitem{pang2016smoothing}
Pang, D., Du, S., Ju, J.: The smoothing fletcher-reeves conjugate gradient
  method for solving finite minimax problems.
\newblock ScienceAsia \textbf{42}(1), 40--45 (2016)

\bibitem{parikh2014proximal}
Parikh, N., Boyd, S.: Proximal algorithms.
\newblock Found. Trends Opt. \textbf{1}(3), 127--239 (2014)

\bibitem{TA98}
Pham-Dinh, T., Le-Thi, H.A.: A {D.C.} optimization algorithm for solving the
  trust-region subproblem.
\newblock SIAM J. Optim. \textbf{8}(2), 476--505 (1998)

\bibitem{phamLe2005dc}
Pham-Dinh, T., Le-Thi, H.A.: The {DC} (difference of convex functions)
  programming and {DCA} revisited with {DC} models of real world nonconvex
  optimization problems.
\newblock Annals Oper. Res. \textbf{133}(1-4), 23--46 (2005)

\bibitem{polak1969note}
Polak, E., Ribiere, G.: Note sur la convergence de m{\'e}thodes de directions
  conjugu{\'e}es.
\newblock ESAIM: Mathematical Modelling and Numerical Analysis-Mod{\'e}lisation
  Math{\'e}matique et Analyse Num{\'e}rique \textbf{3}(R1), 35--43 (1969)

\bibitem{POLYAK19641}
Polyak, B.: Some methods of speeding up the convergence of iteration methods.
\newblock USSR Computational Mathematics and Mathematical Physics
  \textbf{4}(5), 1--17 (1964)

\bibitem{powell1977restart}
Powell, M.J.D.: Restart procedures for the conjugate gradient method.
\newblock Mathematical programming \textbf{12}(1), 241--254 (1977)

\bibitem{l1dl2}
Rahimi, Y., Wang, C., Dong, H., Lou, Y.: A scale invariant approach for sparse
  signal recovery.
\newblock SIAM J. Sci. Comput. \textbf{41}(6), A3649--A3672 (2019)

\bibitem{rivaie2015new}
Rivaie, M., Mamat, M., Abashar, A.: A new class of nonlinear conjugate gradient
  coefficients with exact and inexact line searches.
\newblock Applied Mathematics and Computation \textbf{268}, 1152--1163 (2015)

\bibitem{rockafellar2009variational}
Rockafellar, R.T., Wets, R.J.B.: Variational analysis, vol. 317.
\newblock Springer Science \& Business Media (2009)

\bibitem{roulet2020sharpness}
Roulet, V., d'Aspremont, A.: Sharpness, restart, and acceleration.
\newblock SIAM Journal on Optimization \textbf{30}(1), 262--289 (2020)

\bibitem{saad2003iterative}
Saad, Y.: Iterative methods for sparse linear systems.
\newblock SIAM (2003)

\bibitem{shen2012likelihood}
Shen, X., Pan, W., Zhu, Y.: Likelihood-based selection and sharp parameter
  estimation.
\newblock J. Am. Stat. Assoc. \textbf{107}(497), 223--232 (2012)

\bibitem{su2014differential}
Su, W., Boyd, S., Candes, E.: A differential equation for modeling nesterov’s
  accelerated gradient method: Theory and insights.
\newblock Advances in neural information processing systems \textbf{27},
  2510--2518 (2014)

\bibitem{sun2020adaptive}
Sun, Q., Zhou, W.X., Fan, J.: Adaptive huber regression.
\newblock Journal of the American Statistical Association \textbf{115}(529),
  254--265 (2020)

\bibitem{tong2000analysis}
Tong, C., Ye, Q.: Analysis of the finite precision bi-conjugate gradient
  algorithm for nonsymmetric linear systems.
\newblock Mathematics of computation \textbf{69}(232), 1559--1575 (2000)

\bibitem{Shannon}
Unser, M.: Sampling $-$ 50 years after shannon.
\newblock In: Proceedings of the IEEE, pp. 569 -- 587. IEEE (2000)

\bibitem{vonesch2007fast}
Vonesch, C., Unser, M.: A fast iterative thresholding algorithm for
  wavelet-regularized deconvolution.
\newblock In: Wavelets XII, vol. 6701, p. 67010D. International Society for
  Optics and Photonics (2007)

\bibitem{L1dL2_accelerated}
Wang, C., Yan, M., Rahimi, Y., Lou, Y.: Accelerated schemes for the $
  {L}_1/{L}_2 $ minimization.
\newblock IEEE Trans. Signal Process. \textbf{68}, 2660--2669 (2020)

\bibitem{fundamentals_Sec6.2}
Watkins, D.S.: Subspace iteration and simultaneous iteration.
\newblock pp. 420--428. John Wiley \& Sons (2010)

\bibitem{wright2009sparse}
Wright, S.J., Nowak, R.D., Figueiredo, M.A.: Sparse reconstruction by separable
  approximation.
\newblock IEEE Transactions on signal processing \textbf{57}(7), 2479--2493
  (2009)

\bibitem{wu2019signal}
Wu, C., Zhan, J., Lu, Y., Chen, J.S.: Signal reconstruction by conjugate
  gradient algorithm based on smoothing l1-norm.
\newblock Calcolo \textbf{56}(4), 1--26 (2019)

\bibitem{yinEX14}
Yin, P., Esser, E., Xin, J.: Ratio and difference of $l_1$ and $l_2$ norms and
  sparse representation with coherent dictionaries.
\newblock Comm. Inf. Syst. \textbf{14}(2), 87--109 (2014)

\bibitem{yinLHX14}
Yin, P., Lou, Y., He, Q., Xin, J.: Minimization of $\ell_{1-2}$ for compressed
  sensing.
\newblock SIAM J. Sci. Comput. \textbf{37}(1), A536--A563 (2015)

\bibitem{zhangX17}
Zhang, S., Xin, J.: Minimization of transformed ${L_1}$ penalty: Closed form
  representation and iterative thresholding algorithms.
\newblock Comm. Math. Sci. \textbf{15}, 511--537 (2017)

\bibitem{zhangX18}
Zhang, S., Xin, J.: Minimization of transformed ${L_1 }$ penalty: theory,
  difference of convex function algorithm, and robust application in compressed
  sensing.
\newblock Math. Program. \textbf{169}(1), 307--336 (2018)

\bibitem{zhang2009multi}
Zhang, T.: Multi-stage convex relaxation for learning with sparse
  regularization.
\newblock In: Adv. Neural Inf. Proces. Syst. (NIPS), pp. 1929--1936 (2009)

\end{thebibliography}

\end{document}